\newcommand{\R}{\mathbb R}
\newcommand{\N}{\mathbb N}
\newcommand{\E}{\mathbb E}
\newcommand{\si}{\ensuremath{\sigma}}
\newcommand{\om}{\ensuremath{\Omega}}
\newcommand{\pee}{\ensuremath{\mathbb{P}}}
\newcommand{\loc}{\mathcal{L}}
\def\1{{\mathchoice {\rm 1\mskip-4mu l} {\rm 1\mskip-4mu l}
{\rm 1\mskip-4.5mu l} {\rm 1\mskip-5mu l}}}
\newtheorem{theorem}{{\small T}{\scriptsize HEOREM}}[section]
\newtheorem{corollary}{{\bf{\small C}{\scriptsize OROLLARY}}}[section]
\newtheorem{proposition}{{\bf{\small P}{\scriptsize ROPOSITION}}}[section]
\newtheorem{lemma}{{\bf{\small L}{\scriptsize EMMA}}}[section]
\newtheorem{remark}{{\bf{\small R}{\scriptsize EMARK}}}[section]
\newtheorem{definition}{{\bf{\small D}{\scriptsize EFINITION}}}[section]
\renewenvironment{proof}[1]
{\noindent{{\bf{\small{ P}{\scriptsize ROOF}}}.}\hspace{0.1cm} #1} {$\;\qed$\newline}
\newcommand{\beq}{\begin{eqnarray}}
\newcommand{\eeq}{\end{eqnarray}}
\newcommand{\ba}{\begin{align*}}
\newcommand{\ea}{\end{align*}}
\newcommand{\be}{\begin{equation}}
\newcommand{\ee}{\end{equation}}
\newcommand{\bl}{\begin{lemma}}
\newcommand{\el}{\end{lemma}}
\newcommand{\br}{\begin{remark}}
\newcommand{\er}{\end{remark}}
\newcommand{\bt}{\begin{theorem}}
\newcommand{\et}{\end{theorem}}
\newcommand{\bd}{\begin{definition}}
\newcommand{\ed}{\end{definition}}
\newcommand{\bp}{\begin{proposition}}
\newcommand{\ep}{\end{proposition}}
\newcommand{\bc}{\begin{corollary}}
\newcommand{\ec}{\end{corollary}}
\newcommand{\bpr}{\begin{proof}}
\newcommand{\epr}{\end{proof}}
\newcommand{\bi}{\begin{itemize}}
\newcommand{\ei}{\end{itemize}}
\newcommand{\ben}{\begin{enumerate}}
\newcommand{\een}{\end{enumerate}}
\newcommand{\caB}{{\mathcal B}}
\newcommand{\caC}{{\mathscr C}}
\newcommand{\caD}{{\EuScript D}}
\newcommand{\caF}{{\mathcal F}}
\newcommand{\caH}{{\mathcal H}}
\newcommand{\caK}{{\mathcal K}}
\newcommand{\caR}{{\mathcal R}}
\newcommand{\caT}{{\mathcal T}}
\newcommand{\homo}{\widehat{\Omega}}
\newcommand{\dual}{\rightarrow^D}
\newcommand{\hK}{\widehat{K}}
\newcommand{\A}{{\bf A}}
\newcommand{\K}{{\bf K}}
\newcommand{\colb}[1]{\textcolor[rgb]{0,0,0}{#1}}
\newcommand{\col}[1]{\textcolor[rgb]{0,0,0}{#1}}
\newcommand{\colk}{\color{black}}
\newcommand{\colora}[1]{\textcolor[rgb]{0,0,0}{#1}}
\newcommand{\colorrr}[1]{\textcolor[rgb]{0,0,0}{#1}}
\begin{document}
\title{Dualities in population genetics: a fresh look with
new dualities.}
\author{
Gioia Carinci$^{\textup{{\tiny(a)}}}$,
Cristian Giardin{\`a}$^{\textup{{\tiny(a)}}}$,\\
Claudio Giberti$^{\textup{{\tiny(b)}}}$,
Frank Redig$^{\textup{{\tiny(c)}}}$.\\\\
{\small $^{\textup{(a)}}$ Department of Mathematics, University of Modena and Reggio Emilia}\\
{\small via G. Campi 213/b, 41125 Modena, Italy}
\\
{\small $^{\textup{(b)}}$ Department of Sciences and Methods for Engineering,}\\
{\small University of Modena and Reggio Emilia}\\
{\small via Giovanni Amendola 2,  42122 Reggio Emilia, Italy}\\
{\small $^{\textup{(c)}}$ Delft Institute of Applied Mathematics, Technische Universiteit Delft}\\
{\small Mekelweg 4, 2628 CD Delft, The Netherlands}\\
}
\maketitle

\begin{abstract}
We apply our general method of duality, introduced in \cite{gkr}, to models of population dynamics.
The \col{classical} dualities between forward and ancestral processes can be viewed as a change of representation
in the classical creation and annihilation operators, both for diffusions dual to
coalescents of Kingman's type, as well as for models with finite population size.

Next, using $SU(1,1)$ raising and lowering operators, we find new dualities between
\col{the Wright-Fisher diffusion with $d$ types and the Moran model, both in presence and
absence of mutations. These new dualities relates two forward evolutions.
From our general scheme we also identify self-duality of the Moran model.}
\end{abstract}

\newpage

\section{Introduction}
\color{black}
Duality is one of the most important techniques in interacting particle systems
\cite{l}, models of population dynamics \cite{greven, mhole, huillet}, mutually
catalytic branching \cite{myt} and general Markov process theory \cite{ethierkurtz}.
See \cite{jk1} for a recent review paper containing an extensive list of references, going
even back to the work of L\'{e}vy.

 In all the   interacting particle systems models (e.g. symmetric exclusion, voter model, contact process, etc.) about which we know fine details, such as complete ergodic theorems or explicit formulas for time-dependent correlation functions,
a non-trivial duality or self-duality relation plays a crucial role. This fact also becomes more and more apparent
in recent exact formulas for the transition probabilities of the asymmetric exclusion process
\cite{tracywidom}, \cite{borodin}, where often the starting point is a duality of the type first
revealed in this context by Sch\"{u}tz in \cite{Schutz2}.
In \cite{spohn} the notion of ``stochastic integrability'' is coined, and
related to duality.
It is therefore important to gain
deeper understanding of ``what is behind dualities'', i.e., why some processes admit nice dual processes
and others not, and where the duality functions come from.
This effort is not so much a quest of creating a general abstract framework of duality. It is rather a quest to
create a workable constructive approach towards duality, and using this creating both new dualities in known contexts as well as new Markov processes with nice duality properties.
\color{black}

In the works \cite{gkr,gkrv} duality between two stochastic
processes, in the context of interacting particle systems
and non-equilibrium statistical mechanics, has been
related to a change of representation of an underlying Lie algebra.
More precisely, if the generator of a Markov process is built
from lowering and raising operators (in physics language
creation and annihilation operators) associated to a Lie algebra,
then different representations of these operators give rise to
processes related to each other by duality.
The intertwiner between  the different representations is exactly
the duality function.
Furthermore, self-dualities \cite{grv} can be found using symmetries
related to the underlying Lie algebra (see also \cite{schutz}, \cite{sl}).

The fact that generators of Markov processes can be built from raising and lowering operators
is a quite natural assumption. In interacting particle systems,
the dynamics consists of removing particles at certain places and putting
them at other places. If the rates of these transitions are appropriately
chosen, then the operators of which the effect is to remove or to add a particle
(with appropriate coefficients), \color{black} together with their commutators, generate a Lie algebra \color{black}. For diffusion processes
the generator is built from a combination of multiplication operators
and (partial) derivatives. For specific choices, these correspond
to differential operator representations of a Lie algebra. If this
Lie algebra  also possesses a discrete representation, then this
can lead to a duality between a diffusion process and a process of
jump type, such as the well-known duality between the Wright-Fisher
diffusion and the Kingman's coalescent.

It is the aim of this paper to show that this scheme of finding dual processes
via a change of representation can be applied in the context of mathematical
population genetics. First, we give a fresh look at the classical dualities
between processes of Wright-Fisher type and their dual coalescents.
These dualities correspond to a change of representation in the
creation and annihilation operators generating the Heisenberg algebra.
For population models in the diffusion limit (infinite population size limit)
the duality comes from the standard representation of the Heisenberg
algebra in terms of the multiplicative and derivative operators $(x, d/dx$),
and another discrete representation, known as the Doi-Peliti
representation. The intertwiner
is in this case simply the function $D(x,n)=x^n$.
In the case of finite population size, dualities
arise from
going from a finite-dimensional representation (finite
dimensional creation and annihilation operators
satisfying the canonical commutation relations) to
the Doi-Peliti representation. The intertwiner is exactly
the hypergeometric polynomial found e.g. in \cite{huillet}, \cite{hillion2011natural}, and
gives duality between the Moran model with finite population size
and the Kingman's coalescent.

Next we use the $SU(1,1)$ algebra to \col{find} previously unrevealed dualities between the discrete Moran
model  and the Wright-Fisher diffusion, as well
as self-duality of the discrete Moran model.

These are in fact applications of the previously found dualities between the Brownian energy process and
the symmetric inclusion process, as well as the self-duality of the symmetric inclusion process,
which we have studied in another context in \cite{gkr}, \cite{gkrv}, \cite{grv}.
Put into the context of population dynamics, these dualities give new results for the
multi-type Moran model, as well as the multi-type
Wright-Fisher model.

The rest of our paper is organized as follows. In section
\ref{abstract-setting} we give the general setting and view on duality.
Though many elements of this formalism are already present in previous
papers, we find it useful to put these together here in a unifying, more transparent
and widely applicable framework.
In section \ref{Heisenbergsection}
we discuss dualities in the context of the Heisenberg algebra. This leads
to dualities between diffusions and discrete processes, as well
as between different diffusion processes, and finally between different discrete processes.
In section \ref{Classicalsection} we show how this gives the dualities
between forward (in time) population processes and their ancestral dual coalescents.
In section \ref{sulsection} we apply the $SU(1,1)$ algebra techniques
in the context of population dynamics, finding
new dualities, this time between two forward (in time) processes:
duality between the Wright-Fisher diffusion and the Moran model, and duality of the Moran model with itself.
We give three concrete computations using these new dualities as an illustration.
\section{Abstract Setting}
\label{abstract-setting}

\subsection{Functions and operators.}
Let $\om, \homo$ be metric spaces.
 We denote $\caF(\om), \caF(\homo), \caF(\om\times\homo)$ a space of real-valued functions
from $\Omega$ (resp. $\homo, \om\times\homo$).
Typical examples to have in mind are
$\caC(\om)$, $\caC_c(\om)$ ,$\caC_0(\om)$
the sets of continuous real-valued functions on $\om$ , resp.
continuous real-valued functions with compact support on $\om$, and
continuous real-valued functions on $\om$ going to zero at infinity.
\color{black}
In what follows, this choice of function space is not so stringent, and
can be replaced if necessary by other function spaces such as $L^p$ spaces
or Sobolev spaces.

\noindent
For a function
$\psi:\om\times\homo\to\R$
and linear operators $K: \caD(K)\subset\caF(\om)\to\caF(\om)$, $\hK: \caD(\hK)\subset\caF(\homo)\to\caF(\homo)$ we define
the left action of $K$ on $\psi$ and the right action of $\hK$ on $\psi$ via
\be\label{leftright}
(K_l \psi)(x,y)= \left(K\psi(\cdot, y)\right)(x)\;, \qquad (\hK_r\psi)(x,y)= (\hK\psi(x,\cdot)) (y)\;,
\ee
where we assume that $\psi$ is such that
these expressions are well-defined, i.e. $\psi(\cdot,y)\in \caD(K), \psi(x,\cdot)\in\caD(\hK)$.
\noindent
An important special case to keep in mind is $\om = \{1,\ldots,n\}$ and $\homo=\{1,\ldots,m\}$ finite
sets, in which case we identify functions \colk{on $\om$ or $\homo$ with column vectors and functions on $\om\times \homo$ with $n\times m$ matrices}.
In that case, an operator $K$ (resp.\ $\hK$) on functions on $\om$ (resp.\ $\homo$) coincides with a $n\times n$
(resp. $m\times m$) matrix. Denoting by $\mathbb{K}$ (resp. \ $\widehat{\mathbb{K}}$) such matrices, one has
\be\label{left}
K_l \psi (x, y)= \sum_{z\in\om} \mathbb{K}(x,z) \psi(z, y)= (\mathbb{K}\psi)(x, y)\;,
\ee
\be\label{right}
\hK_r\psi(x,y) =\sum_{u\in\homo} \widehat{\mathbb{K}}(y,u) \psi(x,u)= (\psi \widehat{\mathbb{K}}^T)(x,y)\;.
\ee
Namely, left action of $K$ corresponds to \col{left} matrix multiplication
and right action of $\hK$ corresponds to \col{right} multiplication with the transposed matrix.
The same picture
arises when $\om,\homo$ are countable sets.

\noindent
For two operators $K_1,K_2$ working on the same domain we denote, as usual, $K_1 K_2$ their product or composition, i.e.
\be
(K_1 K_2 f) (x)= \left(K_1 (K_2f)\right) (x)\;
\nonumber
\ee
and
\be
[K_1,K_2]=K_1K_2 - K_2K_1
\nonumber
\ee
the commutator of $K_1$ and $K_2$.
\color{black}
In order to be well-defined, in particular  in the case of {unbounded operators},
we assume that $K_1, K_2$ are working on a common domain
of functions ${\bf D}$, such that $K_i {\bf D}\subset {\bf D}$ for $i=1,2$.

\color{black}

More precisely, we abbreviate $\caB(\om)$
for algebras of linear operators
\colk{working on a common domain $\bf D$,
i.e. ${\bf D} \subset {\caD}(K)$ for all $K\in \caB(\om)$.
 This common domain $\bf D$ is left invariant by the operators, i.e.
$K(\bf D)\subset \bf D$ for all $K\in \caB(\om)$.
\color{black}
}
\color{black} So in such a context expressions
like $\prod_{i=1}^nK_i^{n_i}$ are well-defined
($n_i\in \mathbb{N}$).
The most important contexts in which we naturally have such a common domain and an algebra of operators
working on it are the following.
\begin{enumerate}
\item Formal differential operators working on smooth functions $f:\R^d\to\R$ with compact support
(or smooth functions from a bounded subset of $\R^d$).
Both differential operators, and multiplication with a polynomial
keep this domain invariant. Moreover, in most cases, if an operator belonging to this class
generates a semigroup of contractions (defined via the Hille-Yosida theorem), then
this common domain is a core, i.e., the graph closure of the operator working
on ${\bf D}$ coincides with the generator. See e.g. \cite{ethierkurtz},\cite{nagel}
for details on generators and semigroups, and corresponding diffusion processes. See also \cite{l} for standard arguments
to extend duality from generators to contraction semigroups.
\color{black} Subtleties can arise for domains with boundary: in that case is important to specify boundary conditions
to fix the closure of the operators. These issues will not be dealt with in general here, but on a case-to-case basis
later when we work with diffusions on domains.\color{black}
\item Context of finite set: in this case all functions belong to ${\bf D}$, and
operators can be identified with matrices, i.e., in that case
$\caB(\Omega)$ is just a subalgebra of matrices.
\item Context of a countable set $\Omega=\N_0$, where $\N_0$ denotes the set of positive integers, with zero included. In that case, the operators
we have in mind are finite difference operators, and multiplication operators, working
on e.g. a common domain of functions $f:\N_0\to\R$ going to zero faster than any polynomial
as $n\to\infty$.
\end{enumerate}
\color{black}

These algebras are often representations of an abstract Lie algebra $\caH$
(such as e.g.\ the Heisenberg algebra, see  Sec. 3 and {\cite{Hall}}).
For a general algebra $\caH$, we define the dual
algebra $\caH^*$ as the algebra with the same elements as in $\caH$ but
with product ``$*$'' defined by $a*b= b\cdot a$, where $\cdot$ is the product in $\caH$.
A typical example in the finite dimensional setting is the algebra
of $n\times n$ matrices, where the map $A\to A^T$ maps the algebra into
the dual algebra ($(AB)^T= B^TA^T$).

\bd
For a function
$\psi: \om\times\homo\to\R$ we say that $\psi$ is left exhaustive if
\col{the relation $K_l \psi=0$ implies $K=0$}, and correspondingly we call $\psi$ right
exhaustive if \col{the relation $\widehat{K}_r\psi=0$ implies $\widehat{K}=0$}.
\ed
Notice that in the context of finite sets $\Omega, \widehat{\Omega}$ being right or left exhaustive  just means that the matrix associated to $\psi$ is invertible. In particular we must then require $|\Omega|=|\widehat{\Omega}|$.

\subsection{Duality}
\colk{ We begin this section  with the standard definition of duality \cite{ethierkurtz,l}.
\bd
\label{standard}
Suppose  $\{X_t\}_{t\ge0}$,  $\{\widehat{X}_t\}_{t\ge0}$ are  Markov processes with state spaces  $\om$ and $\widehat{\om}$ and $D: \om\times\homo\to\R$ a bounded
measurable function. The processes  $\{X_t\}_{t\ge0}$,  $\{\widehat{X}_t\}_{t\ge0}$ are said to be dual with respect to $D$  if
\be\label{standarddualityrelation1}
\E_x D(X_t, \widehat{x})=\widehat{\mathbb{E}}_{\widehat{x}} D(x, \widehat{X}_t)\;,
\ee
for all $x\in\om, \widehat{x}\in \hat{\om}$ and $t>0$. In (\ref{standarddualityrelation1})
$\E_x $ is the expectation with respect to the
 law of the $\{X_t\}_{t\ge0}$ process started at $x$, while $\widehat{\mathbb{E}}_{\widehat{x}} $
denotes expectation with respect to the law of the $\{\widehat{X}_t\}_{t\ge0}$ process
initialized at $\widehat{x}$.
\ed
}
\color{black}
Since in most of the examples of duality between
two processes, this property is equivalent with duality of the corresponding
generators, we focus here on dualities between linear operators.
\footnote{An exception is the duality between Brownian motion with reflection and
Brownian motion with absorption, because in that case the duality function $D(x,y)= I(x\leq y)$ (with $I$ denoting the
indicator function) is not in the domain
of the generator. See \cite{jk1} for details on dualities of this type.}

Indeed, in most
examples, duality of the generators in turn follows from
duality between more elementary ``building blocks'' (such as derivatives
and multiplication operators). So in the next section, we focus on
duality between general operators and show how from that notion, which is
conserved under sums and products, it
is natural to consider duality between algebras of operators.
\color{black}

\bd
\label{dualdef2}
Let $K\in \caB(\om)$,\ $\hK\in\caB(\homo)$ and
$D: \om\times\homo\to\R$.
Then we say that $K$ and $\hK$
are dual to each other with duality function $D$
if
\be
\label{dualk}
K_l D= \hK_r D\;
\ee
\colk{where we assume that both sides are well defined, i.e. $D( \cdot, \widehat{x})\in \caD(K)$ for all $\widehat{x}\in \widehat{\om} $ and $D(x,\cdot)\in \caD(\hK)$ for all $x\in \om$.}
We denote this property by
$K\dual \hK$.
\ed
\noindent
In the following we collect elementary but important properties
of the relation $\dual$.
\bt\label{generalprop}
Let $K_1,K_2\in \col{\caB(\Omega)}$, $\hK_1, \hK_2\in \col{\caB(\homo)}$. Suppose that
$K_1\dual \hK_1$, $K_2\dual \hK_2$, and further $c_1,c_2\in \R$
then we have
\begin{enumerate}
\item $\hK_1\rightarrow^{\col{\tilde{D}}} K_1$, with $\col{\tilde{D}(\widehat{x},x)}=D(x,\widehat{x})$.
\item $c_1K_1+c_2K_2\dual c_1\hK_1+ c_2\hK_2$.
\item $K_1K_2\dual \hK_2\hK_1$, in particular $K_1^n\dual \hK_1^n$, $n\in\N$.
\item
$[K_1,K_2]\dual [\hK_2,\hK_1]=-[\hK_1,\hK_2]$, \colk{i.e., commutators of
dual operators are dual}.


\item If $S\in \col{\caB(\Omega)}$ commutes with $K_1$, then
$K_1\rightarrow^{S_l D} \hK_1$, and if $\widehat{S}\in \col{\caB(\homo)}$ commutes
with $\hK_1$, then
$K_1\rightarrow^{\widehat{S}_r D}\hK_1$.
\item If for a collection $\{K_i, i\in I\}\subset\caB(\om)$, and
$\{ \hK_i:i\in I\}\subset\caB(\homo)$ we have
$K_i\dual \hK_i$
then every element of the algebra generated
by $\{K_i, i\in I\}$
is dual to an element of the algebra generated
by $\{ \hK_i:i\in I\}$. More precisely:
\be\label{algrel}
K_{i_1}^{n_1}\ldots K_{i_k}^{n_k}\dual \hK_{i_k}^{n_k}\ldots \hK_{i_1}^{n_1}
\ee
for all $n_1,\ldots, n_k\in \N$,
and for constants $\{c_i : i\in I\}$
\[
\sum_{i\in I} c_i K_i \dual \sum_{i\in I} c_i\widehat{ K}_i\;.
\]
\item Suppose \col{(only in this item)} that
$K_1\in \caB(\Omega_1)$,
$\hK_1\in\caB(\homo_1)$,
$K_2\in \caB(\Omega_2)$,
$\hK_2\in\caB(\homo_2)$.
If $K_1\rightarrow^{D_1} \hK_1$ and $K_2\rightarrow^{D_2} \hK_2$, then
$K_1\otimes K_2 \rightarrow^{D_1\otimes D_2} \hK_1\otimes \hK_2$, where
\[
D_1\otimes D_2 (x_1,x_2;\widehat{x}_1,\widehat{x}_2)= D_1(x_1,\widehat{x}_1)D_2 (x_2,\widehat{x}_2).
\]
\item If $K$ and $\hK$ generate Markov semigroups $S_t=e^{tK}$ and $\widehat{S}_t= e^{t\hK}$,
{and if $D: \om\times\homo\to\R$ are functions such that
$D( \cdot, \widehat{x}), \widehat{S}_tD(\cdot,\widehat{x}) \in \caD(K)$ for all $\widehat{x}\in\widehat{\Omega}$
and $D(x, \cdot), {S}_tD(x, \cdot) \in \caD(\widehat{K})$ for all ${x}\in{\Omega}$,}  then
$K\dual\hK$ implies $S_t\dual \widehat{S}_t$. If moreover these
semigroups correspond to Markov processes $\{X_t,t\geq 0\}, \{\widehat{X}_t,t\geq 0\}$ on $\om, \widehat{\om}$, the
relation $S_t\dual \widehat{S}_t$ reads in terms of these processes:
\be\label{standarddualityrelation}
\E_x D(X_t, \widehat{x})=\widehat{\mathbb{E}}_{\widehat{x}} D(x, \widehat{X}_t)\;,
\ee
for all $x\in\om, \widehat{x}\in \hat{\om}$ and $t>0$.
\end{enumerate}
\et
\bpr
The properties listed in the theorem are elementary and
their proof is left to the reader. The only technical issue is item 8, i.e., passing
from duality on the level of generator to duality of the corresponding semigroups and
processes. This result is obtained by using the uniqueness of the semigroup.
More precisely, if $K$ (or the closure of $K$) generates
a semigroup $S_t$ (formally denoted by $e^{tK}$) then for all
functions $D(\cdot,\widehat{x})$ which are in the domain of $K$ for all
$\widehat{x}\in\widehat{\Omega}$,
the unique solution of the equation
\be
\label{bat}
\frac{d}{dt} f_t(x,\widehat{x}) = K_l f_t(x,\widehat{x})
\ee
with initial condition $f_0(x,\widehat{x})= D(x,\widehat{x})$ is given
by $f_t(x,\widehat{x})= (S_t)_l D(x,\widehat{x})$.

On the other hand, the relation $K\dual\hK$ implies that also
$f_t(x,\widehat{x}) = (\widehat{S}_t)_r D(x,\widehat{x})$ solves
the same equation. Indeed,  since $\widehat{S}_t$ has generator
$\widehat{K}$, it follows that
\[
\frac{d}{dt} (\widehat{S}_t)_r D(x,\widehat{x})
=  (\widehat{S}_t)_r \widehat{K}_r D(x,\widehat{x}) = (\widehat{S}_t)_r {K}_l D(x,\widehat{x}) =
{K}_l (\widehat{S}_t)_r  D(x,\widehat{x})
\]
where in the last equality we used that $\widehat{S}_tD(\cdot,\widehat{x}) \in \caD(K)$.

As the equation \eqref{bat} has a unique solution if follows that
$S_t\dual \widehat{S}_t$.
For more details see \cite{l} theorem 4.13, Chapter 3, pag. 161
in the context of spin systems
and also \cite{nagel, jk1} for more general cases.
\epr
\br
Item 6 of theorem \ref{generalprop} is useful
in particular if the collection
$\{K_i, i\in I\}\subset\caB(\om)$ is a generating set for the algebra.
Then every element of the algebra has a dual by \eqref{algrel}, and it suffices
to know dual operators for the generating set to
infer dual operator for a general element of the algebra.
In practice, one starts from such a generating set and the commutation relations
between its elements (defining the algebra) and associates to it by a single duality
function a set of dual operators with the same commutation relations up to a change of sign
(cfr. item 4).
In other words, one moves via the duality function from a representation
of the algebra to a representation of the dual algebra.
\er
\br
The relation \eqref{standarddualityrelation} is the form in which one usually formulates
duality between two Markov processes (cfr. Def. \ref{standard}).
Remark however that the relation
$S_t\dual \widehat{S}_t$ between the semigroups is more general. It may happen that
$S_t$ is a Markov semigroup, whereas $\widehat{S}_t$ is not. E.g., mass can
get lost in the evolution according to the dual semigroup $\widehat{S}_t$, which
means $\widehat{S}_t 1\not= 1$, or it can happen that
$\widehat{S}_t$ is not a positive operator \colk{(see e.g. Remark 4.2 of \cite{CGGR} where the duality between the generator of the symmetric exclusion process and a non-positive differential operator is exhibited, and e.g. \cite{l} chapter III, section 4,  for duality with Feynman Kac factors in the context of spin systems)}.

\er

In \cite{mhole}, the author studies the so-called duality space associated to
two operators. In our notation, this is the set
\be\label{dualityspace}
\caD(K,\widehat{K})= \{ D:\Omega\times\widehat{\Omega}\to\R \;| \; K\rightarrow^D\widehat{K}\}\;.
\ee
Our point of view here is {\em instead to consider for a fixed duality function the set
of pairs of operators $(K,\widehat{K})$ such that $K\dual \widehat{K}$.}
These operators then usually form a representation and a dual representation of
a given algebra, with $D$ as intertwiner {(see, however, Eq. \eqref{symdualrel})}.

In theorem \ref{generalprop} item 5, we see that we can produce new duality functions
via ``symmetries'', i.e., operators commuting with $K$ or $\widehat{K}$.
In the context of finite sets $\Omega= \widehat{\Omega}=\{1,\ldots,n\}$, we have more:
if there exists an invertible duality function $D$, then all other duality functions
are obtained via symmetries acting on $D$.
We formulate this more precisely in the following proposition.
We will give examples in the \col{subsequent} sections.
\bp\label{symprop}

\noindent
\ben
\item Let $\Omega= \widehat{\Omega}=\{1,\ldots,n\}$, and let $K\rightarrow^D\widehat{K}$. Suppose
furthermore that the associated $n\times n$ matrix  $\mathbb{D}$ is invertible.
Then, if $K\rightarrow^{D'}\widehat{K}$, we have that there exists $S$ commuting
with $K$ such that $D'=SD$.
\item For general $\Omega, \widehat{\Omega}$ we have the following.
Suppose $D$ and $D'$ are duality functions for the duality between
$K$ and $\widehat{K}$. Suppose furthermore that $D'=S_l D$, for
some operator $S$. Then we have
\be
(KS-SK)_l D=0\;.
\ee
In particular if $D$ is left exhaustive, then we conclude $[S,K]=0$.
Similarly, if $D'=\widehat{S}_r D$ then
\be
(\widehat{K}\widehat{S}-\widehat{S}\widehat{K})_r D=0\;.
\ee
and if $D$ is right exhaustive, then we conclude
$[\widehat{S},\widehat{K}]=0$\;.
\een
\ep
\bpr
In the proof of the first item, with slight abuse of notation, we use the notation $K,\widehat{K}, D, S$ both
for the operators and for their associated matrices.
We have, by assumption
\[
KD= D\widehat{K}^T.
\]
By invertibility of $D$, $S=D'D^{-1}$
is well-defined
and we have
\[
SK= D' D^{-1}K = D' \widehat{K}^T D^{-1}\;,
\]
and
\[
KS= KD' D^{-1}=  D' \widehat{K}^T D^{-1}\;,
\]
hence $[K,S]=0$.

\noindent
For the second item, use $K\dual \widehat{K}, K\rightarrow^{D^\prime} \widehat{K}$ to
conclude
\[
(KS)_l D= K_l(S_l D)= K_l D'= \widehat{K}_r D' \;,
\]
as well as
\[
(SK)_l D= S_l \widehat{K}_r (D)= \widehat{K}_r (S_l D)= \widehat{K}_r D' \;,
\]
Hence
\[
([S,K])_l D=0.
\]
\col{The remaining part of the proof (for the right action case) is similar}.
\epr
\subsection{Self duality}

Self-duality is duality between an operator and itself, i.e., referring
to definition \ref{dualdef2}:
$\Omega=\widehat{\Omega}$ and $K=\widehat{K}$. The corresponding
duality function such that $K\dual K$ is then a function $D:\Omega\times\Omega\to\R$
and we call it a self-duality function.
For self-duality, of course, all the properties listed in theorem \ref{generalprop} hold.

\noindent
In the finite case $\Omega=\{1,\ldots,n\}$, a self-duality function  is a $n\times n$
matrix and self-duality reads, in matrix form,
\[
{K}D= D{K}^T\;.
\]
Therefore, in this setting such a matrix $D$ can always be found because every matrix
is similar to its transposed
\cite{trans}, i.e., self-duality always holds with an invertible $D$.

Other self-duality functions can then be found by acting
on a given self-duality function with symmetries of $K$
(i.e.
operators $S$ commuting with $K$), as we derived in
item 5 of theorem \ref{generalprop} and in
proposition \ref{symprop}.
In particular we have in this finite context, in the notation \eqref{dualityspace}:
\be\label{symdualrel}
\caD(K,K)= \{ SD: [S,K]=0\}\;,
\ee
with $D$ an arbitrary invertible self-duality function. So this means that the correspondence
between self-duality functions and symmetries of $K$ is one-to-one. This characterization of
the duality space has the advantage that the set of operators commuting with a given operator
is easier to identify.

\noindent
In the finite setting, if $K$ is the generator (resp. transition operator) of
a continuous-time (resp. discrete-time) Markov chain, then if this Markov chain
has a reversible probability measure $\mu:\om \to [0,1]$, a duality function for self-duality is given
by the diagonal matrix
\[
D(x,y) = \delta_{x,y} \frac{1}{\mu(x)}\;.
\]
This is easily verified from the detailed balance relation $\mu(x) \mathbb{K}(x,y)=\mathbb{K}(y,x)\mu(y)$.
This ``cheap'' duality function is usually not very useful \col{since it is diagonal},
 but it can be turned in a more ``useful'' one by acting
with symmetries. All the known self-duality functions in discrete interacting
particle systems such as the exclusion process, independent random walkers,
the inclusion process, etc. can be obtained by this procedure \cite{gkr}.

\section{Dualities in the context of the Heisenberg algebra.}\label{Heisenbergsection}
\colk{The (abstract) Heisenberg algebra $\caH(m)$ \cite{Hall} is an algebra generated by
$2m$ elements $\K_i$, $\K_i^{\dagger},\, i=1,\ldots, m,$ satisfying the following  commutation relations:
\be
\label{heisenberg-commutation}
[{\K}_i,\K_j]=0,\quad [\K_i^{\dagger},\K_j^{\dagger}]=0,\quad [\K_i,\K_j^{\dagger}]=\delta_{i,j} {I},\quad i,j=1,\ldots, m
\ee
where $I$ is the unit element of $\caH(m)$. Relations (\ref{heisenberg-commutation}) are called {\em canonical commutation relations}.}

In this section we focus on representations of
Heisenberg algebra and its dual algebra, and the corresponding duality functions that
connect these different representations.

\subsection{Standard creation and annihilation operators}
As a first example \colk{of a representation of $\caH(1)$}, let us start with the operators $A^\dagger,A$
working on smooth functions $f:\R\to\R$ with compact support,
defined as
\be
\label{heisenberg-differential}
A f(x)= f'(x),\qquad A^{\dagger} f(x)= xf(x) \;,
\ee
\colk{in physical jargon: the annihilation  and  creation  operators.}
These operators  satisfy the canonical commutation relations \colk{(\ref{heisenberg-commutation}) for $m=1$ with $\K_1=A, \K^\dagger_1=A^\dagger$.
Indeed $[A, A^\dagger]=I$, where $I$ is the identity operator, while the remaining relations are trivially satisfied.}

\noindent
The same commutation relations \colk{(\ref{heisenberg-commutation})}, up to a negative sign,
can be achieved using operators working on discrete
functions. Considering
\be
\label{heisenberg-discrete}
a f(n)= n f(n-1),\qquad  a^{\dagger} f(n)= f(n+1)\;,
\ee
acting on functions \colb{$f:\N_0\to\R$}, we have
$[a, a^\dagger]=-I$. Therefore, in view of the item
4 of theorem \ref{generalprop} and Remark 2.1, \col{the operators
$a,a^\dagger$ are natural candidates
for duality with $A,A^\dagger$.}

\noindent
To find $D$ such that
$A\dual a$,
we use the definition \ref{dualdef2}:
$$
A_l D(x,n)= D'(x,n) = a_r D(x,\col{n})= n D(x,n-1)\;,
$$
which yields
$$
D(x,n) = \sum_{k=0}^n {n \choose k} c_{n-k} \; x^k
$$
with $\{c_i : i \in \N \}$ a sequence of constants.
In the same way, the duality condition $A^{\dagger}\dual a^{\dagger}$,
produces
$$
A^{\dagger}_l D(x,n)= x D(x,n) = a^{\dagger}_r D(x,\col{n})= D(x,n+1)\;,
$$
which gives
$$
D(x,n) = x^n D(x,0)\;,
$$
with $D(x,0)$ an arbitrary function.
Therefore, if we want both dualities to hold with the same duality function,
then we are restricted to the choice
$$
D(x,n) = c_0 x^n\;.
$$
Without loss of generality we \col{can} choose $c_0 =1$.

As a consequence,  by using item  6 of theorem \ref{generalprop},
we obtain the following result.
\begin{theorem}\label{polthm}
For $0\le n \le m$, let $\alpha_n:\R\to\R$ be a finite sequence of polynomials.
The differential operator $K$ defined on smooth functions with compact support, {$f\in \mathcal C_0^\infty(\R)$}, of the form
\[
K=\sum_{n=0}^m \alpha_n(x) \frac{d^n}{dx^n} = \col{\sum_{n=0}^m \alpha_n(A^{\dagger}) A^{n}}
\]
is dual with duality function $D(x,n)=x^n$ to \colk{the operator $\hat K$ acting on the space of real valued functions  $f: \N_0 \to \R$, $f=\{f_n\}_{n\in \N_0}$ :}
\[
\hK= \sum_{n=0}^m a^n \alpha_n (a^\dagger)\;
\]
where the operators $A, A^{\dagger}$ are defined in (\ref{heisenberg-differential})
and  $a, a^{\dagger}$ are defined in (\ref{heisenberg-discrete}).
\end{theorem}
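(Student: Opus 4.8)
The plan is to obtain the statement as a direct consequence of the algebraic closure properties of the duality relation collected in Theorem \ref{generalprop}, applied to the two elementary dualities $A \dual a$ and $A^{\dagger} \dual a^{\dagger}$ already established above with the common duality function $D(x,n)=x^{n}$. Since $\{A,A^{\dagger}\}$ generate the representation of the Heisenberg algebra on smooth functions and $\{a,a^{\dagger}\}$ generate the dual representation, item 6 of Theorem \ref{generalprop} guarantees that every element of the first algebra is dual to a corresponding element of the second, so the only real work is to identify precisely which element of the dual algebra corresponds to $K$.

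First I would rewrite $K$ in creation/annihilation form: since $A^{n}=d^{n}/dx^{n}$ and multiplication by the polynomial $\alpha_n(x)$ is exactly $\alpha_n(A^{\dagger})$, we have $K=\sum_{n=0}^{m}\alpha_n(A^{\dagger})A^{n}$, as stated. Expanding $\alpha_n(t)=\sum_{k}c_{n,k}t^{k}$, each summand is a linear combination of monomials $(A^{\dagger})^{k}A^{n}$. Applying the order-reversing product rule \eqref{algrel} (item 6) to the generating pair gives
\[
(A^{\dagger})^{k}A^{n}\ \dual\ a^{n}(a^{\dagger})^{k},
\]
and then linearity (item 2) yields, for each fixed $n$,
\[
\alpha_n(A^{\dagger})A^{n}\ \dual\ \sum_{k}c_{n,k}\,a^{n}(a^{\dagger})^{k}=a^{n}\alpha_n(a^{\dagger}).
\]
Summing over $n$, once more by item 2, produces $K\dual \sum_{n=0}^{m}a^{n}\alpha_n(a^{\dagger})=\hK$, which is exactly the claim. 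The one point demanding care here is the noncommutative bookkeeping: because $A$ and $A^{\dagger}$ do not commute, the factors must be transposed in reverse order, and this is precisely what turns $\alpha_n(A^{\dagger})A^{n}$ into $a^{n}\alpha_n(a^{\dagger})$ rather than $\alpha_n(a^{\dagger})a^{n}$ on the discrete side, where $a$ and $a^{\dagger}$ likewise fail to commute.

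The main obstacle is not algebraic but functional-analytic: the duality function $D(\cdot,n)=x^{n}$ is a polynomial and hence does not lie in $\mathcal C_0^{\infty}(\R)$, so strictly speaking $D(\cdot,n)$ is outside the domain on which $K$ was originally defined. I would address this exactly as anticipated in Section \ref{abstract-setting}, by reading all the identities above on the common invariant domain of polynomials, on which both multiplication operators and derivatives act and keep the expressions well-defined; the relation $K_lD=\hK_rD$ is then an identity between polynomials in $x$ indexed by $n$, and holds termwise. Correspondingly one checks that $D(x,\cdot)=x^{n}$ lies in the domain of $\hK$ on the discrete side, which is immediate since $\hK$ is a finite combination of the finite-difference operators $a,a^{\dagger}$. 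With this understood, the chain of dualities above is rigorous and the theorem follows; passage to the corresponding semigroups, if desired, is then handled separately via item 8 of Theorem \ref{generalprop}.
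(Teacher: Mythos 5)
Your proposal is correct and follows exactly the route the paper takes: the paper's entire proof is the single sentence ``As a consequence, by using item 6 of theorem \ref{generalprop}, we obtain the following result,'' relying on the elementary dualities $A\dual a$, $A^{\dagger}\dual a^{\dagger}$ with $D(x,n)=x^{n}$ and the order-reversing product rule \eqref{algrel}. Your additional remarks on the reversed ordering $a^{n}\alpha_n(a^{\dagger})$ and on reading the identity $K_lD=\hK_rD$ on the invariant domain of polynomials are sound elaborations of what the paper leaves implicit.
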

\br
Using the Doi-Peliti method, in \cite{ok}, some results of the type of theorem \ref{polthm} are obtained for
reaction diffusion systems.
\er
\color{black}
\colk{We close this section with a representation of the Heisenberg algebra $\caH(m)$ with $m>1$, which generalizes the previous one to functions of several variables and that will be used in the next sections. The generators of this representation, working on the smooth functions $f:\R^m\to\R$ with compact support, are
\be
\label{heisenberg-differential-m}
A_i f(x)= \frac{\partial}{\partial x_i} f(x),\qquad A_i^{\dagger} f(x)= x_if(x) \;, \quad i=1,\ldots,m,
\ee
 also in this case they are called annihilation and creation operators.  Clearly $A_i$ and $A_i^{\dagger}$ satisfy the canonical commutation relations  (\ref{heisenberg-commutation}), thus they generate a representation of the Heisenberg algebra $\caH(m)$.\\
In analogy with  (\ref{heisenberg-discrete})  we introduce a discrete algebra generated by the operators acting on
functions \colb{$f:\N_0^m\to\R$} via
\be\label{heisenberg-discrete-m}
a_i f(n) = n_if(n-e_i),\quad  a^\dagger_i f(n)=f(n+e_i), \quad i=1,\ldots,m
\ee where
\colb{$n\in \N_0^m$ and $e_i\in \N_0^m$} is the $i$-th canonical unit vector defined via $(e_i)_j=\delta_{i,j}$. On the basis of the previous  discussion we have that, for each $i$ the dualities $A_i\rightarrow^{D_i} a_i, A^\dagger_i\rightarrow^{D_i} a^\dagger_i$  hold with duality function $D_i(x_i,n_i)=x_i^{n_i}$. Thus by
Theorem \ref{generalprop}, item 7, we have duality between the tensor products of the generators of the continuous representation  ($\otimes_{i=1}^m K_ i$ with $K_i\in \{A_i, A_i^\dagger\}$) and the tensor products of the generators of the discrete one ($\otimes_{i=1}^m \hK_ i$ with $\hK_i\in \{a_i, a_i^\dagger\}$). The duality function is given by
\be\label{heisenberg-duality-x-n}
D(n,x) =\prod_{i=1}^m D_i(x_i,n_i)= \prod_{i=1}^m x_i^{n_i}\;
\ee
that is $\otimes_{i=1}^m K_ i\rightarrow^D \otimes_{i=1}^m \hK_i$.}

\subsection{Generalization}
In the following proposition we show how to generate the duality functions
for more general generators ${\bf A}, {\bf A^\dagger}$ of a representation
of the Heisenberg algebra, namely by repetitive action of the creation
operator on the ``vacuum'' which is annihilated by the operator $\col{\bf A}$.
\bp\label{aprop}
Suppose $ [{\bf A},{\bf A^\dagger}]=I$, and let $D(x,n)$ be functions such that
\beq\label{assumptionsss}
&&\col{({\bf A}_{\col{l}}^\dagger)^n} D(x,0)= D(x,n)\;,
\nonumber\\
&&
{\bf A}_{\col{l}} D(x,0)= 0\;,
\eeq
then $\A\dual a$ and $\A^\dagger \dual a^\dagger$, where $a,a^\dagger$ are
the discrete representation defined in \eqref{heisenberg-discrete}.
As a consequence, for a finite sequence \col{of polynomials} $\alpha_n$, \col{with}
$0\leq n\leq m$, we have the analogue of \col{theorem} \ref{polthm}:
\[
\sum_{n=0}^m \alpha_n (\A^\dagger) \A^n \dual \sum_{n=0}^m a^n \alpha_n(a^\dagger)\;.
\]
\ep
\bpr
\col{We have $\A^\dagger\dual a^\dagger$ by the assumption on $\A^{\dagger}$ in \eqref{assumptionsss}
and the definition of $ a^\dagger$ in \eqref{heisenberg-discrete}.}
We therefore have to prove $\A\dual a$.
Start from the commutation relation $[\A,\A^\dagger]=I$ to write
\[
\A (\A^\dagger)^n= (\A^\dagger)^n \A +[\A, (\A^\dagger)^n]= (\A^\dagger)^n \A+n (\A^\dagger)^{n-1}.
\]
Then use the assumptions \eqref{assumptionsss} to deduce
\begin{eqnarray*}
\A_{\col{l}} D(x,n)&=& \A_{\col{l}} (a_{\col{r}}^\dagger)^n D(x,0)= \A_{\col{l}} (\A_{\col{l}}^\dagger)^n D(x,0)
\\
&=& (\A_{\col{l}}^\dagger)^n \A_{\col{l}} D(x,0) +n (\A_{\col{l}}^\dagger)^{n-1} D(x,0)= n (\A_{\col{l}}^\dagger)^{n-1} D(x,0)
\\
&=&
n (a_{\col{r}}^\dagger)^{n-1} D(x,0)
\\
&=&
n D(x,n-1)
= a_{\col{r}} D(x,n)\;.
\end{eqnarray*}
\epr

As an application, we can choose linear combinations of multiplication
and derivative
\be
{\bf A}= c_1x + c_2 \frac{d}{dx}, \qquad {\bf A}^{\dagger}= c_3x+ c_4 \frac{d}{dx},
\ee
with the real constants satisfying $c_2c_3-c_1c_4=1$, then we satisfy the
commutation relation $[{\bf A},{\bf A}^\dagger]=I$.
To find the corresponding duality function that ``switches'' from ${\bf A},{\bf A}^\dagger$ to $a, a^\dagger$, we
start with
\[
{\bf A}_l D(x,0)= c_1x D(x,0)+ c_2 D'(x,0)= a_r D(x,0)=0
\]
which gives as a choice
\[
D(x,0)= \exp\left(-\frac{c_1}{c_2}\frac{x^2}{2}\right)
\]
and next,
\[
D(x,n)=({\bf A}^{\dagger}_l)^n D(x,0)= \left(c_3x + c_4 \frac{d}{dx}\right)^n D(x,0).
\]
An important particular case (related to the harmonic oscillator in quantum mechanics and the
Ornstein-Uhlenbeck process), is when  $c_1=c_2=1/2$ and $c_3=-c_4=1$. With this choice one finds that
the duality function is
$D(x,n) = e^{-x^2/2}H_n(x)$, where $H_n$ is the Hermite polynomial of order $n$.


\subsection{Dualities with two continuous variables}
Within the scheme described in section \ref{abstract-setting}
we can also find dualities between two operators
both working on continuous variables, as the following
example shows.

\noindent
Consider again the operators $A,A^{\dagger}$ in (\ref{heisenberg-differential}).
A ``dual'' commutation relation (in the sense of item 4 of theorem \ref{generalprop})
can be obtained by considering a copy of those operators and exchanging their
role. Namely, we look for dualities $d/dx\dual y, x\dual d/dy$. Imposing that the
left action of $d/dx$ (resp. $x$) does coincide with the right action of $y$ (resp. $d/dy$)
one finds the duality function $D(x,y) =e^{xy}$. As a consequence one immediately has
the following
\begin{theorem}
\col{For $0\le n \le m$, let $\alpha_n:\R\to\R$ be a finite sequence of polynomials.}
A differential operator working on smooth functions of the
real variable $x$ and with the generic form
\[
K=\sum_{n=0}^m \alpha_n (x) \frac{d^n}{dx^n}
\]
is dual, with duality function $D(x,y)=e^{xy}$, to the
operator working on smooth functions of the real variable $y$
given by
\[
\hK= \sum_{n=0}^m y^n \alpha_n (\frac{d}{dy})\;.
\]
\end{theorem}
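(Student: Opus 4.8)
The plan is to reduce the statement to the two elementary ``building block'' dualities
\[
\frac{d}{dx} \dual y, \qquad x \dual \frac{d}{dy},
\]
both with duality function $D(x,y)=e^{xy}$, and then to promote these to the full operator $K$ by invoking the algebraic closure properties of $\dual$ collected in Theorem \ref{generalprop}. This is exactly the strategy used to prove Theorem \ref{polthm}, with the discrete representation $a,a^\dagger$ replaced by the second differential representation $y,\,d/dy$. First I would verify the building blocks directly from Definition \ref{dualdef2}. For $D(x,y)=e^{xy}$ one computes the left action
\[
\left(\frac{d}{dx}\right)_l D(x,y) = \frac{\partial}{\partial x} e^{xy} = y\, e^{xy} = y_r D(x,y),
\]
which establishes $\frac{d}{dx} \dual y$, and symmetrically
\[
x_l D(x,y) = x\, e^{xy} = \frac{\partial}{\partial y} e^{xy} = \left(\frac{d}{dy}\right)_r D(x,y),
\]
which establishes $x \dual \frac{d}{dy}$. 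Since $e^{xy}$ is smooth in both arguments, all the left and right actions appearing below are well defined, so the domain hypotheses of Definition \ref{dualdef2} are met throughout.

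Next I would record that $A=d/dx$ and $A^\dagger=x$ generate a representation of the Heisenberg algebra $\caH(1)$, and that their duals $y$ and $d/dy$ satisfy $[d/dy,y]=I$, matching the dual commutation relation furnished by item 4 of Theorem \ref{generalprop} (commutators of dual operators are dual up to sign). With the two building-block dualities in hand, item 6 of Theorem \ref{generalprop} applies to the generating set $\{A,A^\dagger\}$: every monomial in the generators is dual to the reversed monomial in the dual generators. Concretely, writing each coefficient as $\alpha_n(x)=\sum_k c_{n,k}\,x^k=\sum_k c_{n,k}\,(A^\dagger)^k$, a single term of $K$ is a linear combination of monomials $(A^\dagger)^k A^n$, and \eqref{algrel} gives
\[
(A^\dagger)^k A^n \dual y^n \left(\frac{d}{dy}\right)^k .
\]
The order reversal in \eqref{algrel} is precisely what places the multiplication block $y^n$ to the left of the derivative block $(d/dy)^k$, matching the shape of $\hK$.

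Finally, I would assemble the pieces by linearity (item 2 of Theorem \ref{generalprop}). Summing the monomial dualities against the constants $c_{n,k}$ yields, for each fixed $n$,
\[
\alpha_n(x)\frac{d^n}{dx^n} = \sum_k c_{n,k}\,(A^\dagger)^k A^n \dual \sum_k c_{n,k}\, y^n \left(\frac{d}{dy}\right)^k = y^n \alpha_n\!\left(\frac{d}{dy}\right),
\]
and summing over $0\le n\le m$ gives $K \dual \hK$, as claimed. The only genuinely delicate point is keeping track of the order reversal in \eqref{algrel}: because the coefficient polynomials $\alpha_n$ produce only mutually commuting multiplication operators on the $x$-side (and only mutually commuting derivatives on the $y$-side), there is no intra-block ambiguity, and the sole effect of the reversal is to convert $\alpha_n$ from acting by multiplication in $x$ to acting as a differential operator in $y$. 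I expect this bookkeeping, rather than any analytic difficulty, to be the main thing to get right; domain and semigroup issues do not arise, since the claim is purely at the level of the operator duality $K \dual \hK$ and not at the level of any associated Markov processes.
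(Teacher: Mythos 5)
Your proposal is correct and follows essentially the same route as the paper: the paper likewise verifies the building-block dualities $d/dx\rightarrow^D y$ and $x\rightarrow^D d/dy$ with $D(x,y)=e^{xy}$ by matching left and right actions, and then obtains the theorem "immediately" from items 2 and 6 of Theorem \ref{generalprop}, exactly as in the proof of Theorem \ref{polthm}. Your explicit tracking of the order reversal in \eqref{algrel} and of the harmless commutativity within each coefficient block is a faithful (and slightly more detailed) rendering of the paper's one-line argument.
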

\noindent
\color{black}
As a first simple illustration, consider the operator $\frac{1}{2}\frac{d}{dx^2}$, which
is dual to the multiplication operator $\frac{y^2}{2}$ (with duality function $e^{xy}$). The semigroup
with generator $\frac{1}{2}\frac{d}{dx^2}$ is Brownian motion, and the ``semigroup''
generated by $\frac{y^2}{2}$ is of course multiplication with $e^{ty^2/2}$. As a consequence, {denoting by $(W_t)_{t\ge 0}$  the standard Brownian motion}, we have
for the corresponding semigroups:
\[
\E_x \left[e^{yX_t}\right]=\E \left[e^{y(x+W_t)}\right]= \widehat{\E}_{y} \left[e^{Y_t x}\right]= e^{\frac{t y^2}{2}} e^{xy}
\]
which in this case can of course be directly verified from the equality $\E [e^{yW_t}]=e^{\frac{t y^2}{2}} $.
\color{black}

If we specify that the operators work on functions $f:[0, \infty)\to\R$, we can use the
duality $d/dx\dual -y, x\dual -d/dy$ with duality function $D(x,y)= e^{-xy}$ and
when the operators can be interpreted as pregenerators of diffusions one has
the following
\begin{corollary}
The diffusion pregenerator
\[
\loc = (c_1 x^2+ c_2 x) \frac{d^2}{dx^2} + (c_3 x) \frac{d}{dx}
\]
with $c_1>0, c_2\geq 0$
on the domain
\[
\caD(\loc)= \{ f: [0,\infty)\to\R: f, f', f''\in \caC([0,\infty)), \loc f(0)=0\}
\]
is dual to
\[
\widehat{\loc} = c_1 y^2 \frac{d^2}{dy^2} + \left(-c_2 y^2 + c_3 y\right) \frac{d}{dy}
\]
on the same domain, with duality function $D(x,y)= e^{-xy}$. For the corresponding diffusion processes
$\{X_t: t\ge 0\}$, $\{Y_t: t\ge 0\}$ we thus have
\be\label{boeam}
\E_x e^{-yX_t}= \widehat{\E}_y e^{-xY_t}\;.
\ee
The particular case $c_2=0$ gives  that the diffusion pregenerator $c_1x^2 d^2/dx^2+ c_3 x d/dx$ is self-dual.
\end{corollary}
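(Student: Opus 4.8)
First I would observe that the algebraic core of the statement is an immediate specialization of the theorem just proved. Reading off the coefficients of $\loc$, set $\alpha_0\equiv 0$, $\alpha_1(x)=c_3 x$ and $\alpha_2(x)=c_1 x^2+c_2 x$, so that $\loc=\sum_{n=0}^{2}\alpha_n(x)\tfrac{d^n}{dx^n}$. The theorem then produces the dual operator $\hK=\sum_{n=0}^{2}y^n\alpha_n(\tfrac{d}{dy})=c_3 y\tfrac{d}{dy}+y^2\big(c_1\tfrac{d^2}{dy^2}+c_2\tfrac{d}{dy}\big)$, which is exactly $\widehat{\loc}$. This already gives the generator-level duality $\loc_l D=\widehat{\loc}_r D$ with $D(x,y)=e^{xy}$; as a sanity check one computes directly that both sides equal $\big(c_1 x^2 y^2+c_2 x y^2+c_3 x y\big)e^{xy}$.

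Next I would lift this to the process statement \eqref{boeam} by invoking item 8 of theorem \ref{generalprop}. The plan is to first record that, for $c_1>0$ and $c_2\geq 0$, both $\loc$ and $\widehat{\loc}$ are genuine diffusion generators on $[0,\infty)$: the coefficients are smooth and strictly elliptic on $(0,\infty)$, while the behaviour at the origin is pinned down by the condition $\loc f(0)=0$ (resp.\ $\widehat{\loc} f(0)=0$) imposed in the domain, so that well-posedness of the associated one-dimensional martingale problem follows from standard theory (see \cite{ethierkurtz}) and $S_t=e^{t\loc}$, $\widehat{S}_t=e^{t\widehat{\loc}}$ are Markov semigroups. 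I would then check the domain hypotheses of item 8: since every coefficient of $\loc$ and of $\widehat{\loc}$ vanishes at the origin, the boundary condition $\loc f(0)=0$ holds for $f=D(\cdot,y)$ and $\widehat{\loc} g(0)=0$ for $g=D(x,\cdot)$, so these sections lie in the respective domains.

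The genuine obstacle is analytic rather than algebraic, and is precisely the boundary/growth issue flagged earlier in the paper: since $D(x,y)=e^{xy}$ is unbounded it does not belong to $\caC_0([0,\infty))$, so item 8 cannot be applied mechanically. The hard part is to verify that the exponential moments $\E_x e^{yX_t}$ are finite and that $t\mapsto (S_t)_l D$ and $t\mapsto(\widehat{S}_t)_r D$ remain in the relevant domains while solving the backward equation \eqref{bat}. I would handle this through an a priori bound on $\E_x e^{yX_t}$: for these quadratic diffusion generators the moment generating function obeys an explicit first-order evolution equation (just as in the Brownian illustration preceding the corollary), and a Gronwall estimate based on the action of $\loc$ on $e^{xy}$ yields local-in-time finiteness. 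Once finiteness and domain-invariance are in hand, uniqueness of the solution of \eqref{bat} gives $S_t\dual\widehat{S}_t$ and hence \eqref{boeam}.

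Finally, the self-duality claim is immediate: setting $c_2=0$ reduces $\loc$ to $c_1 x^2\tfrac{d^2}{dx^2}+c_3 x\tfrac{d}{dx}$ and $\widehat{\loc}$ to $c_1 y^2\tfrac{d^2}{dy^2}+c_3 y\tfrac{d}{dy}$, which are the same differential operator under relabeling of the variable; thus $\loc=\widehat{\loc}$ and the duality above becomes self-duality with the same function $e^{xy}$.
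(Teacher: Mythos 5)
Your proposal is correct and follows essentially the same route as the paper, which states the corollary as an immediate specialization of the preceding theorem (reading off $\alpha_2(x)=c_1x^2+c_2x$, $\alpha_1(x)=c_3x$, $\alpha_0=0$) together with item 8 of theorem \ref{generalprop}, and gives no separate written proof. Your extra care about the unboundedness of $D(x,y)=e^{xy}$ and the finiteness of the exponential moments is a genuine analytic point that the paper leaves implicit, but it does not change the argument's structure.
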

\br
Notice that naively applying $d/dx\dual y, x\dual d/dy$ with duality function  $D(x,y)=e^{xy}$
in the previous context yields that
\[
\loc = (c_1 x^2+ c_2 x) \frac{d^2}{dx^2} + (c_3 x) \frac{d}{dx}
\]
is dual
to
\[
\widehat{\loc} = c_1 y^2 \frac{d^2}{dy^2} + \left(c_2 y^2 + c_3 y\right) \frac{d}{dy}
\]
with duality function $D(x,y)= e^{xy}$
but it might be that the corresponding relation on the level of the semigroup
\be\label{boeam2}
\E_x e^{yX_t}= \widehat{\E}_y e^{xY_t}\;.
\ee
does not yield useful information because the corresponding processes do not possess exponential
moments, i.e., the relation reads ``$\infty=\infty$''.
\er
\color{black}

\subsection{Discrete creation and annihilation operators}
The following example starts from a finite dimensional representation of
the Heisenberg algebra \colk{$\caH(1)$} (in the spirit of \col{\cite{Dimakis, hillion2011natural}}).
We consider $\Omega=\Omega_N = \{0,\ldots, N\}$ and
$\widehat{\Omega}=\N$.
For functions $f: \Omega_N\to\R$ we define the operators
\begin{eqnarray}
\label{representation1}
\mathrm{a}_N f(k)
& = &
\left(N-k\right)f(k+1) +\left(2k-N\right)f(k) -k f(k-1)\;,\nonumber \\
\mathrm{a}_N^{\dagger} f(k)
& = &
 \sum_{r=0}^{k-1}  (-1)^{k-1-r} \frac{{N\choose r}}{{N \choose k}} f(r)\;,
\end{eqnarray}
with the convention $f(-1)=f(N+1)=0$.
Consider
\be\label{finiteNdual}
D_N(k,n) = \frac{{k\choose n}}{{N\choose n}}= \frac{k(k-1)\cdots(k-(n-1))}{N(N-1)\cdots(N-(n-1))}
\ee
with the convention $D_N (k,0)=1$, $D_N(k,N+1)=0$.
\col{Let us denote by ${\cal W}_N$ the vector space
generated by the functions
$k\mapsto D_N(k, n), 0\leq n\leq N$.}
\bp
\label{finite}
\beq
(\mathrm{a}_N )_l D_N (k,n)&=& n D_N (k,n-1), \ \forall\ 1\leq n, \forall\ k\geq n-1\;,
\nonumber\\
(\mathrm{a}_N )_l D_N (k,0) &=& 0 \ \forall\ 0\leq k\leq N\;,
\nonumber\\
(\mathrm{a}_N^{\dagger})_l D_N(k,n)&=& D_N(k, n+1) \ \forall\ \ 0\leq n\leq N, k\geq n\;.
\eeq
As a consequence, as operators on \col{${\cal W}_N$} we have
\be\label{disheis}
[\mathrm{a}_N ,\mathrm{a}_N^{\dagger}]=I\;,
\ee
i.e., $\mathrm{a}_N ,\mathrm{a}_N^{\dagger}$ form a finite
dimensional representation of the canonical commutation relations.
\ep
\bpr
Straightforward computation.
\epr

\begin{remark}
Notice that in the limit $N\to\infty$, putting $k/N=x$, and $f(k)= \phi(x)=\phi(k/N)$,
$\mathrm{a}_N f(k)$ converges to $d\phi/dx$.
Next, notice that $D_N (k,n)= \phi^{(n)}_N(x)$, where
\[
\phi^{(n)}_N(x) = x \left(\frac{x-\frac1N}{1-\frac1N}\right)\ldots \left(\frac{x-\frac{n-1}N}{1-\frac{n-1}N}\right)\;,
\]
converges to $x^n$. The effect of the operator $\mathrm{a}_N^{\dagger}$ on
$D_N(k,n)$ is to raise the index $n$ by one.
\col{ Since $D_N(k,n) =\phi^{(n)}_N(x) \to x^n$ for $N\to\infty$ and
$\mathrm{a}_N^{\dagger}D_N(k,n) = A^{\dagger}\phi^{(n)}_N(x) \to x^{n+1}$
for $N\to\infty$, we conclude that
in the limit $N\to \infty$, the operator $\mathrm{a}_N^{\dagger}$
coincides with the multiplication operator $A^{\dagger}$ defined
in \eqref{heisenberg-differential}}.
\end{remark}

\noindent
\col{The discrete finite dimensional representation of the Heisenberg algebra
given in proposition \ref{finite} will be used at the end of section \ref{Classicalsection}
to fit within the scheme of a change of representation the classical duality
between the Moran model and the block-counting process of the Kingman's
coalescent. 
Since we only use the block-counting process of the Kingman's coalescent (rather than the full partition-valued process) 
we will use the name ``Kingman's coalescent'' for that block-counting process here and also later on.}
\color{black}
We end this section with a comment on the relation between the discrete representation in proposition \ref{finite}
and the \col{B}inomial distribution.
This also offers an alternative simple way to see the commutation
relation \eqref{disheis}.
\subsection{Relation with invariant measures}
\colk{In many models  where there is duality or self-duality (see e.g.   Exclusion, Inclusion and Brownian Energy processes), there
\col{exists} a one-parameter family of invariant measures $\nu_\rho$,  (see e.g.  Section 3.1 of \cite{CGGR} for more details)} and integrating
the duality function w.r.t.\ these measures usually gives a simple
expression of the parameter \col{$\rho$}.
In the context of diffusion processes with discrete dual,
this relation is usually
that the duality function with $n$ dual particles
integrated over \col{the distribution} $\nu_\rho$
equals $\rho^n$. A similar relation connects
the polynomials $D_N(k,n)$ \col{in \eqref{finiteNdual}}
to the binomial distribution. This general relation between a natural one-parameter
family of measures and the duality functions cannot be a coincidence
and requires further investigation.

The polynomials $D_N(k,n)$ are (as a function of $k$) indeed naturally associated to the binomial distribution.
Denoting by
\[
\nu_{N,\rho} (k)= {N\choose k} \rho^k (1-\rho)^{N-k}
\]
the binomial distribution with success probability $\rho\in [0,1]$, we have
\be\label{binodual}
\sum_{k=0}^N D_N(k,n)\nu_{N,\rho}(k)= \rho^n\;.
\ee

\noindent
For a function $f:\Omega_N\to\R$ we define its binomial transform
$\caT f:[0,1]\to\R$
by
\be\label{ktrans}
(\caT f)(\rho)= \sum_{k=0}^N f(k) \nu_{N,\rho} (k) \;.
\ee
If for such $f$, we
write its expansion
\[
f(k)= \sum_{r=0}^N \col{c_r}D_N(k,r)
\]
we say that $f$ is of degree $l$ if \col{$c_l\not=0$} and all higher coefficients \col{$c_k, k>l$} are
zero. We then have, \col{using \eqref{binodual} and \eqref{ktrans}},
\[
(\caT f)(\rho) = \sum_{r=0}^N \col{c_r} \rho^r.
\]
The functions $f$ and \col{$\caT f$} have therefore the same components with respect
to two different bases: one given by \col{$\{D_N(k,r), r=0,\ldots, N\}$} which is a base of  $\R^{N+1}$ and the
other given by \col{$\{\rho^r, r=0,\ldots, N\}$} which is a base of the space of polynomials on $[0,1]$
of degree at most equal to $N$.
We then have, for all $f$:
\[
\col{(\caT\mathrm{a}_N f)(\rho)}= \col{(\caT f)^{\prime}(\rho)}
\]
and for all $f$ with degree less than or equal to $N-1$:
\[
\col{(\caT \mathrm{a}_N^{\dagger} f)(\rho)}=\rho \cdot \col{(\caT f)}(\rho)\;.
\]
This relation shows that the operators $\mathrm{a}_N,\mathrm{a}_N^{\dagger}$ after binomial transformation
turn into the standard creation and annihilation operators (\colk{$\rho, d/d\rho$}) for
a restricted set of functions (polynomials of degree at most $N$).

\section{Classical dualities of population dynamics}\label{Classicalsection}

The scheme developed in the section 2, together with the change of representation
discussed in section 3, allows to recover many of the well-know dualities of classical
models of population genetics \col{\cite{as,etheridge,jk}}. We first consider diffusion processes of the
Wright-Fisher diffusion type and then discrete processes
for a finite population of $N$ individuals of the Moran type.
\color{black}
In this section, as well as in section \ref{sulsection} when we consider generators $L$ of diffusion processes on an interval or on
a multidimensional simplex $\Omega$, we will always define them with absorbing boundary conditions, i.e.,
the pregenerator (of which the generator is the graph closure) is defined on the domain
of smooth functions $f$ with compact support such that $Lf$ vanishes on the boundary.
In the case that the boundary is not attainable (such as Wright-Fisher diffusion with mutation, depending on the mutation rate)
the domain of the pregenerator
consists of smooth functions $f$ with compact support contained in the interior of $\Omega$.
See \cite{ethierkurtz} chapter 8, section 1, Theorem 1.4 for more details on generators with absorbing boundary conditions.\\
\color{black}

\noindent
{\bf Diffusions and coalescents.}
\\
Consider smooth functions
$f: [0,1]\to\R$ vanishing at the boundaries $0$ and $1$.
A diffusion process on $[0,1]$ has generator
of the form
\be\label{diff}
\loc = \alpha(x) \frac{d^2}{dx^2} + \beta(x) \frac{d}{dx}
=\alpha(A^\dagger) A^2 +\beta(A^\dagger) A\;,
\ee
\col{with $A$ and $A^\dagger$ defined in (\ref{heisenberg-differential})}. More precisely, we choose
\beq\label{ab}
\alpha(x)&=&
 \sum_{k=1}^\infty \alpha_k x^k\;,
\nonumber\\
\beta(x) &=&
\sum_{k=0}^\infty \beta_k x^k\;,
\eeq
where the coefficients $\alpha_k,\beta_k$ satisfy the following
\beq\label{condcoef}
&&\alpha_2= -\sum_{k\not=2, k=1}^\infty \alpha_k,\qquad \alpha_k\geq 0\quad  \forall k\not= 2 \;,
\nonumber\\
&&\beta_1=-\sum_{k\neq 1, k=0}^\infty \beta_k, \qquad \beta_k\geq 0 \quad \forall k\not=1\;.
\eeq
Typical choices are
$\alpha(x)= x-x^2$, $\beta(x)=(1-x)$.
By the duality $A\dual a$, $A^\dagger\dual a^\dagger$ and by theorem
\ref{polthm} we find that
$\loc$ is dual to
\beq\label{dualdiff}
\hat{\loc} f(n)&=& \left(a^2 \alpha(a^\dagger) + a\beta(a^\dagger)\right) f(n)
\nonumber\\
&=&
n(n-1) \sum_{k =1}^\infty \alpha_k (f(n+k-2)- f(n))
\nonumber\\
&& +  \;n \sum_{k=0}^\infty  \beta_k( f(n+k-1)-f(n))
\eeq
with duality function $D(x,n)=x^n$.
By the conditions (\ref{condcoef}) on the coefficients, this corresponds to a Markov
chain on the natural numbers.

\noindent
We can then list a few examples.
\ben
\item {\bf Wright Fisher neutral diffusion.}
\[
\loc= x(1-x) \frac{d^2}{dx^2}= A^\dagger(1-A^\dagger)A^2\;.
\]
This corresponds to $\beta=0$ and $-\alpha_2=\alpha_1=1$ and gives the dual
\begin{eqnarray*}
\hat{\loc} f(n)&=& \left(a^2 (a^\dagger(1-a^\dagger))\right)f(n)
\\
&=&n(n-1) (f(n-1)-f(n))\;,
\end{eqnarray*}
which is the well-known Kingman's coalescent block-counting process.
\item {\bf Wright Fisher diffusion with mutation.}
\[
\loc = x(1-x) \frac{d^2}{dx^2} + \theta(1-x) \frac{d}{dx}=A^\dagger(1-A^\dagger)A^2+\theta (1-A^\dagger)A\;.
\]
This corresponds to $\alpha_1=-\alpha_2=1$, $\beta_0=-\beta_1=\theta$.
This gives the dual
\begin{eqnarray*}
\hat{\loc} f(n)&=&\left(a^2 (a^\dagger(1-a^\dagger))+ \theta a(1-a^\dagger)\right)f(n)
\\
&=&
n(n-1) (f(n-1)-f(n))+\theta n (f(n-1)-f(n))\;,
\end{eqnarray*}
which corresponds to Kingman's coalescent with extra rate $\theta n$ to go down from
$n$ to $n-1$, due to mutation.
\item {\bf Wright Fisher diffusion with ``negative'' selection.}
\be\label{sel}
\loc= x(1-x) \frac{d^2}{dx^2} - \si x(1-x) \frac{d}{dx}= A^\dagger(1-A^\dagger)(A^2-\si A)
\ee
with $\si>0$,
which corresponds to $\alpha_1=-\alpha_2=1$, $\beta_2=-\beta_1=\si$.
The dual is
\begin{eqnarray}\label{corombo}
\hat{\loc} f(n)&=& \left((a^2-\si a)a^\dagger(1-a^\dagger)\right)f(n)
\\
&=&
n(n-1) (f(n-1)-f(n)) + \si n (f(n+1)-f(n))\nonumber
\end{eqnarray}
\color{black}
Notice that
\be\label{sel2}
\loc= x(1-x) \frac{d^2}{dx^2} + \si x(1-x) \frac{d}{dx}= A^\dagger(1-A^\dagger)(A^2-\si A)
\ee
with $\si>0$ (i.e., ``positive selection'') can also be dealt with. Indeed it is dual to the same process
\eqref{corombo}, but now with duality function $(1-x)^n$, coming from
the representation $(1-x)$, $-d/dx=d/d(1-x)$ of the generators of the Heisenberg algebra. This in turn
corresponds to the transformation $x\mapsto 1-x$.\color{black}
\color{black}
\item {\bf Stepping stone model.}
This is an extension of the Wright-Fisher diffusion, modelling subpopulations of which the individuals have two types, and
which evolve within each subpopulation as in the neutral Wright Fisher diffusion, and additionally, after reproduction a fraction of each subpopulation is exchanged with other subpopulations.
These subpopulations are indexed by a countable set $S$. The variables $x_i\in [0,1], i\in S$ then represent the fraction
of type $1$ in the $i^{th}$ subpopulation.
The generator of this model is defined on smooth
local functions (i.e., depending
 on a finite number of variables) on the set $\Omega=[0,1]^S$
  and given by
\be\label{stepstone}
\loc=\sum_{i,j\in S} p(i,j)(x_j-x_i) \left(\frac{\partial}{\partial x_i}-\frac{\partial}{\partial x_j}\right) + \sum_{i\in S} x_i(1-x_i)
\frac{\partial^2}{\partial x_i^2}\;.
\ee
Here
$p(i,j)=p(j,i)$, with positive entries outside of the diagonal and with $\sum_{j\in S} p(i,j)=1$,
is an irreducible symmetric Markov transition\color{black}\  kernel on the set $S$.

\colk{In terms of the standard creation and annihilation
operators $A_i^\dagger = x_i, A_i = \frac{\partial}{\partial x_i}$, introduced (in the case of $m=|S| $ finite) at the end of Section 3.1,  this generator reads}
\be\label{stepheis}
\loc = \sum_{i,j\in S} p(i,j) (A^{\dagger}_j-A^{\dagger}_i)(A_i-A_j) + \sum_{i\in S} A_i^\dagger(1-A_i^\dagger)A_i^2\;.
\ee

\colk{The dual operators $a_i$ and $a_i^\dagger$ have also been introduced in (\ref{heisenberg-discrete-m}) and the duality function between tensor products of operators  is (\ref{heisenberg-duality-x-n}).}�
As a consequence the generator $\loc$ in \eqref{stepheis} is dual to $\hat{\loc}$ given by
\be\label{dualstepstone}
\hat{\loc} = \sum_{i,j\in S } p(i,j)(a_i-a_j) (a^\dagger_j-a^\dagger_i)+ \sum_{i\in S} a_i^2 a_i^\dagger(1-a_i^\dagger)
\ee
or equivalently the generator  $\loc$ in \eqref{stepstone} is dual to $\hat{\loc}$ given by
\begin{eqnarray}
\label{dualstepstone2}
\hat{\loc} f(n)
& = & \sum_{i,j\in S } p(i,j)n_i(f(n-e_i+e_j)-f(n))\nonumber\\
& + & \sum_{i,j\in S} p(j,i) n_j(f(n+e_i-e_j)-f(n))\nonumber\\
& + & \sum_{i\in S} n_i(n_i-1)(f(n_i-e_i)-f(n))\;,
\end{eqnarray}
which is the generator of a Markov process on $\N_0^S$ with transitions
$n\to n-e_i+ e_j$ (resp. $n\to n-e_j+ e_i$) at rate $n_ip(i,j)$ (resp. $n_jp(j,i)$) and
$n\to n-e_i$ at rate $n_i (n_i-1)$. \colorrr{Here $e_i$ denotes the vector with components
$(e_i)_k = \delta_{i,k}$}. The first type of transitions are of random walk type and correspond
to the exchange of subpopulations, whereas the second type are the transitions corresponding to the Kingmans' coalescent
in each subpopulation.
\een

\noindent
{\bf Finite-size populations \col{\cite{cannings, gladstien1978characteristic}} and coalescents.}\\
As a final example, we illustrate the use of the discrete creation and annihilation
operators $\col{a^\dagger_N}, a_N$, corresponding to population models with
$N$ individuals in the discrete Moran model.
This is the discrete analogue of the neutral Wright-Fisher diffusion
\be
\label{moran2}
\loc_Nf(k) = \frac{N^2}{2} \frac{k}{N}  \left(1-\frac{k}{N}\right) \left(f(k+1) + f(k-1) -2 f(k)\right)\;.
\ee
In terms of the discrete creation and annihilation operators $\mathrm{a}_N, \mathrm{a}_N^{\dagger}$
\col{defined in \eqref{representation1}},  this generator
reads
\be
\col{\loc_N= \mathrm{a}_N^{\dagger}(1-\mathrm{a}_N^{\dagger}) \mathrm{a}_N^2}\;.
\ee
\col{By theorem \ref{generalprop} and proposition \ref{finite},
we therefore obtain immediately that this generator is dual to the generator of the Kingman's
coalescent with duality function \eqref{finiteNdual}}.

\section{$SU(1,1)$ \col{algebra} and corresponding dualities}\label{sulsection}
In this section we show new dualities for models of population dynamics, using
dualities between well-chosen differential operators and discrete
operators. 
These operators have been used in the context of particle systems and models
of heat conduction \cite{gkr}. Interpreted here in terms of population models,
they yield in that context new dualities.

In the whole of this section, the common domain $\bf D$ of the operators that will appear
(as section 2.1) will be 
the set of multivariate polynomials. This set is closed under the action of the operators, and
forms a core of the Markov generators that will appear.
\color{black}

\colk{The results of this section are obtained applying the $SU(1,1)$ algebra \cite{ikg}, which is an (abstract) algebra generated by a set of elements $\{\K_i^+,
\K_i^-,\K_i^o\},\, i=1,\ldots, n$
that satisfy the following commutation relations:
\be\label{su11comA}
[\K_i^o, \K_i^{\pm}] =
\pm \K_i^{\pm} \;,
\qquad\qquad
[\K_i^-,\K_i^+] =
2 \mathcal \K_i^o\;.
\ee
}
We start with
the following two families
(labeled by $m$) of infinite dimensional representations
of the algebra $SU(1,1)$. The first family of operators acts on \colk{smooth}
functions $f: [0,\infty)\to\R$, whereas the second family acts on
functions \colb{$f: \N_0\to\R$}.
\begin{eqnarray}\label{contsu11}
\caK^+ &=& z \;,\nonumber\\
\caK^- &=& z \frac{d^2}{dz^2} +\frac{m}{2} \frac{d}{dz}\;,\nonumber\\
\caK^0 &=& z\frac{d}{dz} +\frac{m}{4}\;,
\end{eqnarray}
and
\begin{eqnarray}\label{discrsu11}
K^+ f(n)&=& \left(\frac{m}{2}+n\right)f(n+1)\;,\nonumber\\
K^- f(n) &=& n f(n-1)\;,
\nonumber\\
K^0 f(n) &=& \left(\frac{m}{4} +n\right) f(n)\;.
\end{eqnarray}
The $\caK$ operators satisfy the $SU(1,1)$ commutation relations \colk{(\ref{su11comA})}
whereas the $K$ operators satisfy the dual commutation relations
(i.e., with opposite sign).
Therefore, the operators are candidates for
a duality relation \colk{(see item 4 of Theorem 2.1 and Remark 2.1)}.

In order to find corresponding duality functions,
we now first give the analogue of Proposition \ref{aprop}
in the context of the $SU(1,1)$ algebra. This tells us that
if $\caK^+$ and $\caK^0$ are dual to their discrete analogues
given in \eqref{discrsu11} with duality function $D$, and
$D(z,0)$ is ``annihilated'' by $\caK^-$ (i.e., $\caK^- D(z,0)=0$), then, using
the commutation relations \eqref{su11comA}, we obtain that
$\caK^-$ and $K^-$ are also dual with the same duality function, and hence, \colk{by item 6 of Theorem 2.1,}
the whole algebra spanned by $\caK^\alpha$ is dual to the algebra spanned by
the $K^\alpha$, $\alpha\in\{+,-,0\}$.
\bp\label{kprop}
Suppose $D(z,n)$ are functions such that
\beq\label{assuK}
\caK_l^+ D(z,n)&=& \left(\frac{m}{2}+n\right)D(z,n+1)
\nonumber\\
\caK_l^0 D(z,n)&=& \left(n+\frac{m}{4}\right) D(z,n)
\nonumber\\
\caK_l^- D(z,0)&=&0
\eeq
where the $\caK^{\alpha}$, for $\alpha\in \{+,-,0\}$, are working on the $z$-variable.
Then we have
$\caK^{\alpha}\dual K^\alpha$ , where
$K^\alpha$ are the discrete operators defined in \eqref{discrsu11}.
\ep
\bpr
By assumption \eqref{assuK} we have
$\caK^{\alpha}\dual K^\alpha$ for $\alpha\in \{+,0\}$.
Therefore we have to prove that $\caK^-\dual K^-$.
In this proof we abuse notation and denote
$\caK^{\alpha} D(z,n) = \caK_l^{\alpha} D(z,n)$.
We start by proving that
\be
\caK^- D(z,1)= D(z,0)
\ee
Using \eqref{assuK} with $n=1$
\beq
\caK^- D(z,1) &=& \caK^- \left(\frac{\caK^+ D(z,0)}{m/2}\right)
\nonumber\\
&=&
\frac2m \caK^-\caK^+ D(z,0)
\nonumber\\
&=&
\frac2m \left(\caK^+\caK^- + [\caK^-,\caK^+]\right)D(z,0)
\nonumber\\
&=&
\frac2m (2\caK^0) D(z,0)
\nonumber\\
&=& \frac2m\left(\frac{m}{2} D(z,0)\right) = D(z,0)
\eeq
Then, we proceed by induction. Assume $\caK^- D(z,n-1)= (n-1) D(z,n-2)$. Then
\beq
\caK^- D(z,n) &=&
\caK^-\left(\frac{\caK^+ D(z,n-1)}{\frac{m}2+n-1}\right)
\nonumber\\
&=&
\frac1{\frac{m}2+n-1}\left(\caK^+\caK^- + [\caK^-,\caK^+]\right) D(z,n-1)
\nonumber\\
&=&
\frac1{\frac{m}2+n-1}\left(\caK^+\caK^- + 2\caK^0\right) D(z,n-1)
\nonumber\\
&=&\frac1{\frac{m}2+n-1}\caK^+\left((n-1)D(z,n-2)\right) + \nonumber \\
& & + \frac1{\frac{m}2+n-1}\left(2n-2 +\frac{m}{2}\right) D(z,n-1)
\nonumber\\
&=&\frac{D(z,n-1)}{\frac{m}2+n-1}\left(\left(\frac{m}{2} +n-2\right)(n-1) + 2n-2 + \frac{m}2\right)
\nonumber\\
&=& n D(z,n-1)
\eeq
Here in the third step we used the commutation relations, in the fourth step the induction hypothesis
and \eqref{assuK},
and in the fifth step \eqref{assuK}.
\epr

To find the duality function $d: [0,\infty)\times\N\to\R$ relating the discrete and continuous
representations \eqref{contsu11} and \eqref{discrsu11} we use the previous proposition: first
\[
{\caK}_{\col{l}}{^-} d(z,0)= \left(z \frac{d^2}{dz^2} +\frac{m}{2} \frac{d}{dz}\right) d(z,0)= K_{\col{r}}^- d(z,0)=0
\]
which gives as a possible choice $d(z,0)= 1$. Then, we can act with $\caK^+$:
\[
(\caK_{\col{l}}^+)^n d(z,0)= z^n = (K_{\col{r}}^+ )^n d(z,0)= \frac{m}{2} \left(\frac{m}{2}+1\right)\ldots \left(\frac{m}{2}+ n-1\right)d(z,n)\;,
\]
and we find
\be\label{dualsu11}
d(z,n)=\frac{z^n}{\frac{m}{2} \left(\frac{m}{2}+1\right)\ldots \left(\frac{m}{2}+ n-1\right)}
= \frac{z^n\Gamma\left(\frac{m}{2}\right)}{\Gamma\left(\frac{m}{2} +n\right)}\;.
\ee
Since $d(z,n)$ is of the form $c_n z^n$ we also see that
\[
\caK_l^0 d(z,n) = \left(z\frac{d}{dz} +\frac{m}{4}\right) d(z,n) = \left(n+ \frac{m}{4}\right) d(z,n)
\]
Then, by proposition \ref{kprop}
$\caK^-_l d(z,n)= K^-_r d(z,n)$.
We can then summarize these findings in the following result.
\bp\label{su11prop}
The family of operators given by \eqref{contsu11} and
the family of operators given by \eqref{discrsu11}
are dual with duality function given by \eqref{dualsu11}.
As a consequence, every element of the algebra generated
by the operators \eqref{contsu11} is dual to
an element of the algebra \col{generated by}
\eqref{discrsu11}, obtained by replacing the operators by their duals
and reverting the order of products.
\ep

\subsection{Markov generators constructed from $SU(1,1)$ raising and lowering operators.}
\label{sub1}
The relevance of the $K^{\pm},\caK^{\pm}$ lies in the fact that some natural
generators of diffusion \col{processes} of population dynamics can be rewritten in terms of them.
As mentioned before (see beginning of section 4), these will be generators of processes on a multidimensional simplex
with {\em absorbing boundary conditions}, i.e., the domain of the pregenerator $L$ consists of
smooth functions on the simplex such that $Lf$ vanishes at the boundary.
\color{black}
We start now with defining these generators.
\begin{definition}[\cite{etheridge}, p. 55]
The $d$-types Wright-Fisher model with symmetric parent-independent mutation
at rate $\theta\in\R$ is a diffusion process on the simplex $\sum_{i=1}^dx_i=1$
defined by the generator
\begin{eqnarray}
\label{dWF-mut}
\mathscr{L}_{d,\theta}^{WF} g(x)
& = &
\sum_{i=1}^{d-1} \frac12 x_i(1-x_i) \frac{\partial^2 g(x)}{\partial x_i^2}
- \sum_{1\le i < j \le d-1} x_ix_j \frac{\partial^2 g(x)}{\partial x_i \partial x_j} \nonumber \\
& + & \frac{\theta}{d-1}\sum_{i=1}^{d-1}(1-dx_i)\frac{\partial g(x)}{\partial x_i}\;.
\end{eqnarray}
\end{definition}
\begin{definition}
The Brownian Energy process with parameter $m\in\R$ on the complete graph
with $d$ vertices \col{(BEP(m))} is a diffusion on $\R_+^d$ with generator
\begin{eqnarray}
\label{bep}
\mathscr{L}_d^{BEP(m)} f(y)
& = &
\frac{1}{2}\sum_{1\le i< j \le d} y_iy_j \left(\frac{\partial}{\partial y_i} - \frac{\partial}{\partial y_j}\right)^2 f(y)\nonumber \\
& - &
\frac{m}{4} \sum_{1\le i < j \le d} (y_i-y_j) \left(\frac{\partial}{\partial y_i} - \frac{\partial}{\partial y_j}\right) f(y) \;.
\end{eqnarray}
\end{definition}
\bp\label{bep=wf}
The Brownian Energy process with parameter $m\in\R$ on the
complete graph with $d$ vertices and  with initial condition $\sum_{i=1}^{d}x_i=1$
coincides with the $d$-types Wright-Fisher model
with symmetric parent-independent mutation at rate $\theta= \frac{m}{4}(d-1)$, i.e.
$$
\mathscr{L}_d^{BEP(m)} f(x_1,\ldots,x_{d-1}, x_d)
=
\mathscr{L}_{d,\frac{m}{4}(d-1)}^{WF}g(x_1,\ldots,x_{d-1})
$$
with
$$
g(x_1,\ldots,x_{d-1}) = f(x_1,\ldots,x_{d-1}, 1-\sum_{j=1}^{d-1}x_j)\;.
$$
\ep
\begin{proof}
The statement of the proposition is a consequence of the fact the BEP evolution conserves
the quantity $x_1+\ldots+ x_d$. Consider the initial condition
$\sum_{i=1}^d x_i =1$ and define the function $\phi: \R^{d-1}\to\R^{d}$
such that
$$
(x_1,\ldots, x_{d-1}) = x \mapsto \phi(x)= (x_1,\ldots,x_{d-1}, 1-\sum_{j=1}^{d-1}x_j)\;.
$$
Then $g(x) = f(\phi(x))$ and, for all $i=1,\ldots,d-1$, using the chain rule gives
$$
\frac{\partial g(x)}{\partial x_i} = \frac{\partial f(\phi(x))}{\partial y_i}  -  \frac{\partial f(\phi(x))}{\partial y_d} \;.
$$
A computation shows that
$$
\mathscr{L}_d^{BEP(m)} f(x_1,\ldots,x_{d-1}, x_d)
=
\mathscr{L}_{d,\frac{m}{4}(d-1)}^{WF}\;g(x_1,\ldots,x_{d-1})\;.
$$
\end{proof}
\begin{definition}[\cite{etheridge-griffiths}, Eq. (12)]
In the $d$-types Moran model with population size $N$ and with  symmetric parent-independent mutation at rate $\theta$,
a pair of individuals of types $i$ and
$j$ are  sampled uniformly at random, one dies with probability $1/2$ and the other reproduces.
In between reproduction events each individual accumulates
mutations at a constant rate $\theta$ and his type mutates to any of the others with
the same probability.
Therefore, denoting types occurrences by $k= (k_1,\ldots,k_{d-1})$, where $k_i$ is the number of individuals of type $i$,
the process has generator
\begin{eqnarray}
\label{dMoran-mut}
& & \mathscr{L}_{N,d,\theta}^{Mor} \;g(k)  = \nonumber\\
& & \frac12 \sum_{1\le i < j \le d-1}
\left[ k_i \left(k_j + \frac{2 \theta}{d-1}\right) \;(g(k-e_i+e_j) - g(k)) \right. \nonumber \\
&&
\qquad\qquad\qquad\left. k_j \left(k_i + \frac{2 \theta}{d-1}\right) \;(g(k+e_i-e_j) - g(k)) \right] \nonumber \\
& & +
\frac12 \sum_{i=1}^{d-1}
\left[\left(N-\sum_{j=1}^{d-1}k_j\right) \left( k_i + \frac{2 \theta}{d-1}   \right) (g(k+e_i) -g(k))\right.\nonumber\\
&&
\qquad\qquad\qquad\left. k_i\left(N-\sum_{j=1}^{d-1}k_j  + \frac{2 \theta}{d-1}   \right) (g(k-e_i) -g(k))\right]\;.\nonumber\\
\end{eqnarray}
\end{definition}
\begin{definition}
The \col{Symmetric Inclusion} process with parameter $m\in\R$ on the complete graph with $d$ vertices \col{(SIP(m))} is a Markov
process on \colk{$\N_0^d$} with  generator
\begin{eqnarray}
\label{sip}
&& \mathscr{L}_{d}^{SIP(m)} f(k) = \nonumber\\
&& \frac12 \sum_{1\le i < j \le d}
\left[k_i \left(k_j + \frac{m}{2}\right) (f(k-e_i+e_j) - f(k)) \right.\nonumber \\
&& \qquad\qquad\quad +\left. k_j \left(k_i + \frac{m}{2}\right) (f(k+e_i-e_j) - f(k))\right]\;.\nonumber\\
\end{eqnarray}
\end{definition}
\bp\label{sip=mor}
The generator of the \col{Symmetric Inclusion} process with parameter $m\in\R$ on the complete graph with
$d$ vertices and with initial condition $\sum_{i=1}^{d}n_i=N$
coincides with the generator of the $d$-types Moran model
with population size $N$ and with symmetric parent-independent mutation at rate $\theta= \frac{m}{4}(d-1)$.
\ep
\bpr
One verifies that
$$
\mathscr{L}_d^{SIP(m)} f(k_1,\ldots,k_{d-1}, k_d)
=
\mathscr{L}_{N,d,\frac{m}{4}(d-1)}^{Mor}\;g(k_1,\ldots,k_{d-1})
$$
with
$$
g(k_1,\ldots,k_{d-1}) = f(k_1,\ldots,k_{d-1}, N-\sum_{j=1}^{d-1}k_j)\;.
$$
\epr
We can now state our duality result.
\begin{theorem}
\label{pippo3}
In the presence of symmetric parent-independent mutation at rate $\theta$,
the $d$-types Wright-Fisher diffusion process with generator
(\ref{dWF-mut})  and the $d$-types Moran model with $N$ individuals
and with generator (\ref{dMoran-mut}) are dual with duality function
\be
\label{dual-fct}
\tilde{D}_N(x,k) = \prod_{i=1}^{d} \frac{x_i^{k_i}}{\Gamma(\frac{2\theta}{d-1} + k_i)}\;,
\ee
with
$$
x_d = 1-\sum_{j=1}^{d-1}x_j\;,
\qquad\qquad
k_d = N-\sum_{j=1}^{d-1}k_j\;.
$$
\end{theorem}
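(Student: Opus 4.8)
The plan is to exhibit both generators as the \emph{same} abstract element of the $SU(1,1)$ algebra, read off in the two representations \eqref{contsu11} and \eqref{discrsu11}, and then to let the representation-change duality of Proposition \ref{su11prop} do the work, combined with the algebra properties of Theorem \ref{generalprop}. The bridge to the processes in the statement is provided by Propositions \ref{bep=wf} and \ref{sip=mor}: the former identifies the $d$-types Wright-Fisher diffusion \eqref{dWF-mut} with the Brownian Energy process $\mathscr{L}_d^{BEP(m)}$ of \eqref{bep} restricted to $\sum_i x_i=1$, the latter identifies the Moran model \eqref{dMoran-mut} with the Symmetric Inclusion process $\mathscr{L}_d^{SIP(m)}$ of \eqref{sip} restricted to $\sum_i k_i=N$, both under the dictionary $\theta=\frac{m}{4}(d-1)$. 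It therefore suffices to establish the duality at the level of BEP$(m)$ and SIP$(m)$ and then to restrict to these two invariant manifolds.

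First I would attach to each site $i=1,\dots,d$ a copy of the continuous operators $\caK_i^{\pm},\caK_i^{0}$ of \eqref{contsu11} acting on the variable $y_i$, and a copy of the discrete operators $K_i^{\pm},K_i^{0}$ of \eqref{discrsu11} acting on the occupation number $k_i$. The decisive computation is the edgewise identity, valid for every $i<j$,
\[
\tfrac12\bigl(\caK_i^{+}\caK_j^{-}+\caK_i^{-}\caK_j^{+}-2\caK_i^{0}\caK_j^{0}\bigr)
=\tfrac12\,y_iy_j\Bigl(\tfrac{\partial}{\partial y_i}-\tfrac{\partial}{\partial y_j}\Bigr)^{2}
-\tfrac{m}{4}(y_i-y_j)\Bigl(\tfrac{\partial}{\partial y_i}-\tfrac{\partial}{\partial y_j}\Bigr)-\tfrac{m^{2}}{16},
\]
so that summing over edges reproduces $\mathscr{L}_d^{BEP(m)}$ up to an additive scalar. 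A fully parallel calculation on the discrete side, using $(K_j^{-}f)(k+e_i)=k_jf(k+e_i-e_j)$ and the analogous shifts, shows that the \emph{same} quadratic combination of the $K_i^{\alpha}$ reproduces the corresponding edge contribution of $\mathscr{L}_d^{SIP(m)}$ in \eqref{sip}, again up to the identical additive scalar. Since the leftover scalar is the same multiple of the identity on both sides, and scalars are trivially self-dual, by linearity (item 2 of Theorem \ref{generalprop}) it may be carried along freely.

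Next I would invoke Proposition \ref{su11prop}, which yields $\caK^{\alpha}\rightarrow^{d}K^{\alpha}$ for each $\alpha\in\{+,-,0\}$ with the single-site function $d(y_i,k_i)=y_i^{k_i}\Gamma(\tfrac m2)/\Gamma(\tfrac m2+k_i)$ of \eqref{dualsu11}. Lifting each of these to the $d$-fold product space by item 7 of Theorem \ref{generalprop} gives $\caK_i^{\alpha}$ dual to $K_i^{\alpha}$ with the product duality function $\prod_{j}d(y_j,k_j)$; because operators carrying distinct site labels act on distinct variables and hence commute, the order reversal prescribed in item 3 is invisible. Items 2 and 3 then promote the edgewise dualities to a duality of the full edge-sums, and adding the common self-dual scalar gives $\mathscr{L}_d^{BEP(m)}\rightarrow^{D}\mathscr{L}_d^{SIP(m)}$ with $D(y,k)=\prod_{i=1}^{d}d(y_i,k_i)$. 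Discarding the irrelevant constant factor $\Gamma(\tfrac m2)^{d}$ and substituting $\tfrac m2=\tfrac{2\theta}{d-1}$ turns $D$ into $\prod_{i=1}^{d}y_i^{k_i}/\Gamma(\tfrac{2\theta}{d-1}+k_i)$. Finally, restricting to $\sum_i x_i=1$ and $\sum_i k_i=N$ with $x_d=1-\sum_{j<d}x_j$, $k_d=N-\sum_{j<d}k_j$, and applying Propositions \ref{bep=wf} and \ref{sip=mor}, reproduces exactly the duality function \eqref{dual-fct} and the asserted duality between \eqref{dWF-mut} and \eqref{dMoran-mut}.

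The step I expect to be the real obstacle is the bookkeeping in the two Casimir-type identities: one must check that $\caK_i^{+}\caK_j^{-}+\caK_i^{-}\caK_j^{+}-2\caK_i^{0}\caK_j^{0}$ reproduces the BEP edge generator and that the analogous discrete combination reproduces the SIP edge generator, matching both the off-diagonal hopping coefficients and the diagonal term, and in particular that the spurious constant $-m^{2}/8$ comes out identically in the two representations. Everything downstream is a formal consequence of the algebra-homomorphism structure of duality in Theorem \ref{generalprop} together with Proposition \ref{su11prop}. The only further point requiring attention is the domain of the restricted diffusion, which is handled by the absorbing boundary conventions adopted in Section \ref{sub1}.
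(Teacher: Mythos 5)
Your proposal is correct and follows essentially the same route as the paper: both express BEP$(m)$ and SIP$(m)$ as the identical quadratic element of $SU(1,1)$ in the continuous and discrete representations, invoke Proposition \ref{su11prop} together with the product/sum/tensor properties of Theorem \ref{generalprop} to get the duality with $\prod_i d(x_i,k_i)$, and then transfer to the Wright--Fisher and Moran generators via Propositions \ref{bep=wf} and \ref{sip=mor}. The only difference is that you spell out the edgewise Casimir identity (including the $-m^2/16$ constant) that the paper merely asserts.
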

\bpr
The statement of the theorem is a consequence of the duality between
BEP(m) and SIP(m), which we now recall.
We consider the two families of operators representing the $SU(1,1)$ and
dual $SU(1,1)$ commutation relations, now rewritten in $d$ coordinates:
\begin{equation}
\left\{
\begin{array}{ll}
\mathscr K_{m,i}^+=x_i\\
\\
\mathscr K_{m,i}^-=x_i\,\frac{\partial^2}{\partial {x_i}^2} +\frac{m}{2}\frac{\partial}{\partial {x_i}} \\
\\
\mathscr K_{m,i}^0=x_i \, \frac{\partial}{\partial {x_i}} + \frac{m}{4}
\end{array}
\right.
\end{equation}
and the corresponding discrete operators
\begin{equation}
\left\{
\begin{array}{ll}
\mathrm K_{m,i}^+ f(k_i)  = \left(k_i+ \frac{m}{2}-1\right)  f(k_i-1) \\
\\
\mathrm K_{m,i}^-  f(k_i)  = (k_i+1) f(k_i+1)\\
\\
\mathrm K_{m,i}^o f(k_i)  = \left(k_i+\frac{m}{4}\right)  f(k_i)\;.
\end{array}
\right.
\\
\end{equation}
The generator of the BEP(m) then reads
\begin{equation}
\label{abstractbep}
\mathscr{L}_{m} = \frac{1}{2}
\sum_{1\le i < j \le d}
\left(
  \mathscr K_{m,i}^+\mathscr K_{m,j}^- +
  \mathscr K_{m,i}^-\mathscr K_{m,j}^+ -
2\mathscr K_{m,i}^o\mathscr K_{m,j}^o +
  \frac{m^2}{8}
\right)\;,
\end{equation}
By proposition \ref{su11prop}, combined with
theorem \ref{generalprop}, we find that this operator
is dual to
the operator
\begin{equation}
\label{abstractsip}
\mathrm{L}_{m} = \frac{1}{2}
\sum_{1\le i < j \le d}
\left(
  \mathrm K_{m,i}^+\mathrm K_{m,j}^- +
  \mathrm K_{m,i}^-\mathrm K_{m,j}^+ -
2\mathrm K_{m,i}^o\mathrm K_{m,j}^o +
  \frac{m^2}{8}
\right)\;,
\end{equation}
This operator is exactly the generator of the SIP(m).
The duality function is given by, using once more theorem \ref{generalprop}, item 7:
$$
{D}_N(x,k) = \prod_{i=1}^{d} d(x_i,k_i)\;.
$$
where $d(z,k)$ is given in \eqref{dualsu11}.
The multiplicative constant $\Gamma(m/2)$ in \eqref{dualsu11} can be dropped, and the result
of the theorem thus follows
from combining the duality between BEP(m) and SIP(m) with
proposition \ref{bep=wf} and proposition \ref{sip=mor}.
\epr

\subsection{\col{Limiting duality between $d$-types Wright-Fisher diffusion and $d$-types Moran model}}
\label{sub2}

We can now also let $m\to 0$, or correspondingly $\theta\to 0$ to obtain
a duality result between the neutral Wright-Fisher diffusion and
the standard Moran model.

Let $d\ge 2$ be an integer denoting the number of types (or alleles)
in a population.
\begin{definition}
The $d$-types Wright-Fisher model is a diffusion process on the simplex $\sum_{i=1}^dx_i=1$
defined by the generator
\be
\label{dWF}
\mathscr{L}_{d}^{WF} g(x) =
\sum_{i=1}^{d-1} \frac12 x_i(1-x_i) \frac{\partial^2 g(x)}{\partial x_i^2}
- \sum_{1\le i < j \le d-1} x_ix_j \frac{\partial^2 g(x)}{\partial x_i \partial x_j}\;.
\ee
\end{definition}
\begin{definition}
The Brownian Energy process with $m=0$ on the complete graph
with $d$ vertices is a diffusion on $\R_+^d$ given by the generator
\be
\label{bep0}
\mathscr{L}_d^{BEP(0)} f(y)=
\frac{1}{2}\sum_{1\le i< j \le d} y_iy_j \left(\frac{\partial}{\partial y_i} - \frac{\partial}{\partial y_j}\right)^2 f(y)\;.
\ee
\end{definition}
\bp\label{bep0=wf}
The generator of the Brownian Energy process with $m=0$ on the
complete graph with $d$ vertices and  with
initial condition $\sum_{i=1}^{d}x_i=1$
does coincide with the generator of the $d$-types Wright-Fisher diffusion.
\ep
\begin{proof}
\col{Similar to the proof of proposition \ref{bep=wf}}
\end{proof}
%
%
\begin{definition}
In the $d$-types Moran model with population size $N$ a pair of individuals of types $i$ and
$j$ are  sampled uniformly at random, one dies with probability $1/2$ and the other reproduces.
Therefore, denoting type occurrences by $k= (k_1,\ldots,k_{d-1})$, where $k_i$ is the number of individuals of type $i$,
the process has generator
\begin{eqnarray}
\label{dMoran}
\mathscr{L}_{N,d}^{Mor} g(k) & = &
\frac12 \sum_{1\le i < j \le d-1} k_i k_j \;(g(k+e_i-e_j) + g(k-e_i+e_j) - 2g(k))\nonumber \\
& + &
\frac12 \sum_{i=1}^{d-1} k_i\left(N-\sum_{j=1}^{d-1}k_j\right) (g(k+e_i)+g(k-e_i) -2g(k))\;.\nonumber\\
\end{eqnarray}
\end{definition}
\begin{definition}
The Symmetric Inclusion process with $m=0$ on the complete graph with $d$ vertices is a Markov
process on \colk{$\N_0^d$} with  generator
\begin{eqnarray}
\label{sip0}
\mathscr{L}_{d}^{SIP(0)} f(k) & = &
\frac12 \sum_{1\le i < j \le d} k_i k_j (f(k+e_i-e_j) + f(k-e_i+e_j) - 2f(k))\;.\nonumber\\
\end{eqnarray}
\end{definition}
\bp\label{sip0=moran}
The generator of the Symmetric Inclusion process with $m=0$ on the complete graph with
$d$ vertices and with initial condition $\sum_{i=1}^{d}n_i=N$
does coincide with the generator of the $d$-types Moran model
with population size $N$, i.e.
$$
\mathscr{L}_d^{SIP(0)} f(k_1,\ldots,k_{d-1}, k_d)
=
\mathscr{L}_{N,d}^{Mor}g(k_1,\ldots,k_{d-1})
$$
with
$$
g(k_1,\ldots,k_{d-1}) = f(k_1,\ldots,k_{d-1}, N-\sum_{j=1}^{d-1}k_j)\;.
$$
\ep
\begin{proof}
\col{Similarly to the proof of proposition \ref{sip=mor}}, the result follows from the conservation law, namely the
fact that the SIP evolution conserves the total number of particles $k_1+\ldots+ k_d$.
\end{proof}

In the duality result of theorem \ref{pippo3} we cannot directly substitute $m=0$ because
\col{there would be problems when some $k_i=0$.}
To state a duality result for $\col{\theta=0}$, i.e., between the Wright Fisher diffusion and
the Moran model without mutation, we
\col{start again from the duality between Brownian Energy process and Symmetric Inclusion process:}
\beq\label{bim}
&&\E_x^{BEP(m)} \left(\prod_{i=1
}^d
\frac{x_i(t)^
{\xi_i}}{\frac{m}{2} \ldots \left(\frac{m}{2}+\xi_i-1\right)}
\right)
\\
&=&
\E_\xi^{SIP(m)}
\left(\prod_{i=1
}^d
\frac{x_i^
{\xi_i(t)}}{\frac{m}{2} \ldots \left(\frac{m}{2}+\xi_i(t)-1\right)}
\right)\;. \nonumber
\eeq
Here the products in lhs and rhs are by definition equal to $1$ when
$\xi_i=0, \xi_i(t)=0$.\color{black}

For \colk{$\xi\in\N_0^d$}, denote $\caR(\xi)=\sharp \{i \in \{1,\ldots, d\}: \xi_i\geq 1\}$. Then we can rewrite
\eqref{bim}
and obtain
\beq\label{bam}
&&\E_x^{BEP(m)} \left(\prod_{i=1, \xi_i\geq 1}^d
\frac{x_i(t)^
{\xi_i}}{\left(\frac{m}{2}+1\right) \ldots \left(\frac{m}{2}+\xi_i-1\right)}
\right)
\\
&=&
\E_\xi^{SIP(m)}
\left(\color{black}\left(\frac{m}{2}\right)^{\left(\caR(\xi)-\caR(\xi(t))\right)} \color{black} \prod_{i=1, \xi_i(t)\geq 1}^d
\frac{x_i^
{\xi_i(t)}}{\left(\frac{m}{2}+1\right) \ldots \left(\frac{m}{2}+\xi_i(t)-1\right)}
\right)\;, \nonumber
\eeq
\color{black}
where now the denominators in the products in lhs and rhs are by definition equal to $1$ when
$\xi_i=1, \xi_i(t)=1$.\color{black}

Now we are in the position
to take the limit $m\to 0$ and we find
\beq\label{dualsipbepm=0}
&&\E_x^{BEP(0)}
\left(\prod_{i=1, \xi_i\geq 1}^d
\frac{x_i(t)^{{\xi_i}}}{(\xi_i-1)!}\right)
\\
&=&
\lim_{m\to 0}\E_\xi^{SIP(m)}
\left(
\left(\frac{m}{2}\right)^{\left(\caR (\xi)-\caR(\xi(t))\right)}
\prod_{i:\xi_i(t)\geq 1}
\frac{{x_i}^{\xi_i(t)}}{(\xi_i(t)-1)!}
\right)\;.\nonumber
\eeq
Notice that the lhs becomes zero as soon as one of the $x_i$ is zero,
which corresponds to the fact that for all $i$, $x_i=0$ is an absorbing set in the diffusion.
Corresponding to this, the rhs becomes zero as soon as one of the species
disappears, i.e., as soon as $\caR(\xi)$ decreases by one unit. Notice however that in the rhs
we can not simply substitute $m=0$ as we did in the lhs, since
$\left(\frac{m}{2}\right)^{\left(\caR (\xi)-\caR(\xi(t))\right)}$ can be of order $(1/m)^k$
\color{black} with $k>0$ \color{black}
with correspondingly small probability. Therefore, in the rhs we do not exactly
recover the $SIP(0)$, but have to keep $m$ positive and take the limit after the expectation.
We call \eqref{dualsipbepm=0} ``{\em a limiting duality relation with duality function}''
\be\label{dualfunctionm=0}
D(\xi,x)=\left(\prod_{i=1, \xi_i\geq 1}^d
\frac{x_i^{\xi_i}}{(\xi_i-1)!}\right)\;.
\ee
By the correspondence of $SIP(m)$ with the Moran model, and
$BEP(m)$ with the Wright-Fisher diffusion,
the limiting duality relation \eqref{dualsipbepm=0} can also be read as a limiting duality between
Wright-Fisher without mutation and the Moran model in the limit of zero mutation.
\color{black}

\br
We remark that the duality results {in subsections \ref{sub1} and \ref{sub2}}
are of a different nature than the usual dualities
between forward process and coalescent. Indeed, we have here duality between two
``forward processes'' (the Wright-Fisher diffusion and the Moran model), which cannot
be obtained from ``looking backwards in time'', the method by which moment-dualities
with the coalescent are usually obtained. In our framework, the dualities with the
coalescent correspond to a change of representation in the Heisenberg
algebra, whereas the dualities between e.g. Wright-Fisher and Moran model
arise from a change of representation in the $SU(1,1)$ algebra.
\er
\color{black}
\subsection{Limiting self-duality of the $d$-types Moran model}
We can push further the \col{$SU(1,1)$} structure behind the
Moran model and deduce self-duality of the process.
\begin{theorem}\label{pippooo}
The $d$-types Moran model with $N$ individuals
and with generator (\ref{dMoran-mut}) is self-dual with duality function
\be
\bar{D}_N(k,\xi) = \prod_{i=1}^{d} \frac{k_i!}{(k_i-\xi_i)!} \frac{\Gamma\left(\frac{2\theta}{d-1}\right)}{\Gamma\left(\xi_i+\frac{2\theta}{d-1}\right)} \;,
\ee
where $k_d= N-\sum_{i=1}^{d-1} k_i$ and $ \xi_d= N-\sum_{i=1}^{d-1} \xi_i$.
\end{theorem}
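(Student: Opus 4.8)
The plan is to obtain Theorem \ref{pippooo} from the self-duality of the Symmetric Inclusion process, in exactly the way Theorem \ref{pippo3} was obtained from the BEP$(m)$--SIP$(m)$ duality. By Proposition \ref{sip=mor}, the generator (\ref{dMoran-mut}) of the $d$-types Moran model with mutation rate $\theta$ coincides, on the fiber $\sum_{i=1}^{d}k_i=N$, with the generator (\ref{sip}) of SIP$(m)$ for $m=\tfrac{4\theta}{d-1}$, the $d$-th coordinate being recovered through $k_d=N-\sum_{i=1}^{d-1}k_i$. Since $\tfrac{2\theta}{d-1}=\tfrac{m}{2}$, the function $\bar D_N$ in the statement is precisely the product over the $d$ sites of the single-site factor $\tfrac{k_i!}{(k_i-\xi_i)!}\,\tfrac{\Gamma(m/2)}{\Gamma(\xi_i+m/2)}$. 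It therefore suffices to prove that the generator $\mathrm L_m$ in (\ref{abstractsip}) of SIP$(m)$ is self-dual with this product function, and that the self-duality is compatible with the conservation law $\sum_i k_i=N$.

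For the self-duality of SIP$(m)$ I would invoke the $SU(1,1)$ mechanism of \cite{grv}, which fits the abstract scheme of Section \ref{abstract-setting}. At each vertex $i$ the operators $\mathrm K_{m,i}^{+},\mathrm K_{m,i}^{-},\mathrm K_{m,i}^{o}$ realize the (dual) $SU(1,1)$ algebra, and the total operators $\mathrm K^{\alpha}:=\sum_{i=1}^{d}\mathrm K_{m,i}^{\alpha}$, $\alpha\in\{+,-,o\}$, again close into $SU(1,1)$. A direct computation shows that $\mathrm L_m$ is, up to an additive constant, a scalar multiple of the total Casimir $(\mathrm K^{o})^{2}-\tfrac12(\mathrm K^{+}\mathrm K^{-}+\mathrm K^{-}\mathrm K^{+})$, the single-site Casimirs being themselves constants on each irreducible component; whence $[\mathrm L_m,\mathrm K^{\alpha}]=0$ for every $\alpha$. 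Thus $\mathrm K^{+},\mathrm K^{-},\mathrm K^{o}$ are symmetries of $\mathrm L_m$ in the sense of item 5 of Theorem \ref{generalprop}. In particular $[\mathrm L_m,\mathrm K^{o}]=0$ expresses the conservation of the total particle number $\sum_i k_i$, so both marginals stay on the fiber and the self-duality, once established for SIP$(m)$, descends to the Moran model.

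I would then build the self-duality function itself by acting with such a symmetry on the cheap diagonal one. As explained in Section \ref{abstract-setting}, the reversible product measure of SIP$(m)$, with single-site weight $w(k)=\Gamma(m/2+k)/(\Gamma(m/2)\,k!)$, yields the diagonal self-duality function $D_0(k,\xi)=\prod_i \delta_{k_i,\xi_i}/w(k_i)$, i.e.\ $\mathrm L_m\rightarrow^{D_0}\mathrm L_m$. Applying a suitable symmetry generated by the global operators $\mathrm K^{\pm}$ and using item 5 of Theorem \ref{generalprop} together with Proposition \ref{symprop}, one obtains a new self-duality function; since $D_0$ is supported on the diagonal, the power series in the symmetry truncates site by site and its coefficients combine with $w$ into the falling factorial $k_i!/(k_i-\xi_i)!$ and the ratio $\Gamma(m/2)/\Gamma(\xi_i+m/2)$. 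With the constant in the symmetry fixed appropriately this reproduces $\bar D_N$ and, after eliminating $k_d,\xi_d$ via the conservation law, gives the stated self-duality of the Moran model.

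The main obstacle is the nontrivial self-duality function, not the reduction. There is no site-by-site self-duality that simply exchanges $\mathrm K^{+}\leftrightarrow\mathrm K^{-}$: the requirement $\mathrm K^{o}_l d=\mathrm K^{o}_r d$ would force $(k+m/4)\,d(k,\xi)=(\xi+m/4)\,d(k,\xi)$ and hence collapse $d$ to the diagonal. One is therefore forced to pass through a genuine symmetry of the full complete-graph generator, and the real work is twofold: first, verifying the Casimir identity $[\mathrm L_m,\mathrm K^{\alpha}]=0$ on the common domain; second, carrying out the explicit resummation of the symmetry acting on $D_0$ and checking that it produces exactly the Gamma-ratio/falling-factorial product, including the boundary convention that $\bar D_N$ vanishes unless $\xi_i\le k_i$ for all $i$.
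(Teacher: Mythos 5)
Your proposal is correct and follows the same route as the paper: the paper's proof of this theorem consists precisely of invoking the self-duality of the SIP$(m)$ process (citing \cite{grv}) together with Proposition \ref{sip=mor}, with $\frac{m}{2}=\frac{2\theta}{d-1}$ identifying the single-site factors. The additional material you supply --- the Casimir/symmetry argument and the construction of the self-duality function by acting with a global $SU(1,1)$ symmetry on the cheap diagonal duality function coming from the reversible product measure --- is exactly the mechanism of \cite{grv} and of Section 2.3 of the paper, so it fills in the cited result rather than departing from the paper's argument.
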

\begin{proof}
The result follows from the self-duality property of the SIP(m) process \cite{grv}
and from proposition \ref{sip=mor}.
\end{proof}

The limit $m\to 0$, or equivalently $\theta\to 0$,
leads to a limiting self-duality relation, i.e., the $SIP(0)$ is dual
to $SIP(m)$ in the limit $m\to 0$, and correspondingly, the Moran model
with zero mutation has a limiting self-duality relation with the Moran model
in the limit of zero mutation
\beq\label{boembalaa}
&&\E^{SIP(0)}_\eta
\left(
\prod_{i=1, \xi_i\geq 1}^d
\frac{\eta_i(t) !}{(\eta_i(t)-\xi_i)!(\xi_i-1)!}
\right)
\\
&=&
\lim_{m\to 0}\E^{SIP(m)}_\xi
\left(\left(\frac{m}{2}\right)^{\caR(\xi)-\caR(\xi(t))}
\prod_{i=1, \xi_i(t)\geq 1}^d
\frac{\eta_i !}{(\eta_i-\xi_i(t))!(\xi_i(t)-1)!}
\right)\;. \nonumber
\eeq
\color{black}
\color{black}

\subsection{Examples}
\colora{Here we give a few simple illustrations of concrete computations using the dualities of the present section. 
Notice that  these computations can also be performed  using
the coalescent in the context of $d$ types.
One can for example use the function-valued dual process of the Fleming-Viot process, discussed e.g. in Sect. 1.12 of \cite{ethe-super}
or in Sect. 2.8 of \cite{dawson-dd}, or alternatively the embedding of the Kingman coalescent into the Òlook-downÓ construction 
\cite{donnelly-kurtz,bertoin-legall}.} 

Our approach using
the duality between Wright-Fisher and Moran is an alternative, and in our opinion slightly simpler
one.
\color{black}
In general, remark that the duality between Wright-Fisher and Moran implies that if we want to
compute an expectation of a polynomial of degree $k$ at time
$t$ in the multi-type Wright Fisher model, we have to  consider a Moran model with
$k$ individuals. Also, if we want to compute the expectation of a polynomial of degree
$k$ in the number of individuals of different types in a Moran model with $N$ individuals,
we can do it by using only a Moran model with $k$ individuals. So the main
simplification coming from these dualities is the fact that we can go from ``many'' ($N$)
to ``few'' ($k$) individuals (which can be useful in particular in simulations).
\color{black} The concrete computations that follow below are chosen somewhat arbitrarily
as an illustration \colora{of those simplifications.}

\color{black}
Before we start these computations, we remark that if
in \eqref{boembalaa} or \eqref{dualsipbepm=0}, we start with $\caR(\xi)=d$ equal to its maximal
value, then
the non-zero contributions in the limit $m\to 0$ only come from
configuration $\caR(\xi_t)= d$ (since automatically $\caR(\xi_t)\leq d$, so in that case there
are no contributions for which $\caR(\xi_t)>\caR(\xi)$, i.e., with a negative exponent of $m$).
\color{black}
\ben
\item Heterozygosity of two-types Wright-Fisher \col{diffusion}.
This is defined as the probability that two randomly chosen individuals
are of different types (\cite{etheridge}, pag. 48).
To compute this quantity we can use 
the limiting duality between
the $BEP(0)$ process $(x(t),y(t))$ on two sites, with initial condition
$(x,y)$ such that $x+y=1$ and the process $SIP(m)$, $m\to 0$. \color{black}
\begin{eqnarray*}
\E^{BEP(0)}_{x,y} (x(t)y(t))&=&  \lim_{m\to 0}\E^{SIP(m)}_{1,1}\left( xy I(n_1(t)=1, n_2(t)=1)\right)\\
&=&
\E^{SIP(0)}_{1,1}\left( xy I(n_1(t)=1, n_2(t)=1)\right)
\\
&=& xy
\pee_{1,1}(n_1(t)=1, n_2(t)=1)= xy e^{-t}\;,
\end{eqnarray*}
\col{where $\pee_{1,1}$ denotes the law of the $SIP(0)$ process initialized
with one particle per site.}
\item Higher moments of two-types Wright-Fisher \col{diffusion}. We use the same notation of
the previous item and \col{consider for instance $x^2y$}. Further, we notice that if
we start the $SIP(0)$ from initial configuration $(n_1,n_2)=(2,1)$, then
\colora{configurations $(3,0)$ and $(0,3)$ are absorbing and}
the only transitions before absorption are of the type $(2,1)\to (1,2)$ and vice
versa, and both transitions occur at rate $2$, whereas from any
of these states, the rate to go to the absorbing states is also equal to 2.
Therefore,
\begin{eqnarray*}
&&\E^{BEP(0)}_{xy} (x^2(t) y(t))
\\
&&=x^2 y \pee_{2,1}^{SIP(0)} ((n_1(t), n_2(t))=(2,1)) + \nonumber \\
&& \qquad + x y^2 \pee_{2,1}^{SIP(0)} ((n_1(t), n_2(t))=(1,2))
\\
&&=
\frac{e^{-2t}}{2}(x^2y (1+e^{-2t}) + xy^2 (1-e^{-2t}))\;.
\end{eqnarray*}
\item Analogue of heterozygosity for $d$-types Wright Fisher diffusion. Notice that
for multitype Wright Fisher, there is no simple analogue of the Kingman's coalescent,
as for the two-types case.
This means that we have the $BEP(0)$ started from $x_1,\ldots,x_d$
\begin{eqnarray*}
&&\E^{BEP(0)}_{x_1,\ldots,x_d} (x_1(t)\ldots x_d(t))
\\
&&=x_1\ldots x_d\E^{SIP(0)}_{(1,1,\ldots,1)} \left(I(n_i(t)\not= 0 \;\;\forall i\in \{1,\ldots,d\})\right)
\\
&&= x_1\ldots x_d e^{-\frac{d(d-1)t}{2}}\;.
\end{eqnarray*}
\item Analogue of $x^2y$ for the multi-type case.
\begin{eqnarray*}
&&\E^{BEP(0)}_{x_1,\ldots,x_d} (x^2_1(t) x_2\ldots x_d(t))
\\
&&=\E^{SIP(0)}_{(2,1,\ldots,1)} \left(\left(\prod_i x_i^{n_i(t)}\right)I(n_i(t)\not= 0 \;\;\forall i\in \{1,\ldots,d\})\right)
\\
&&= \sum_{i=1}^d \left(\prod_{j\not=i} x_j\right) x_i^2
\pee^{SIP(0)}_{(2,1,\ldots,1)} (n_1(t)=1,\ldots, n_i(t)=2, \ldots, n_d(t)=1)\;.
\end{eqnarray*}
To compute the latter probability, we remark that starting from
the configuration $(2,1,\ldots,1)$, the $SIP(0)$ will \colora{reach an absorbing state}
as soon as one of
the particles on the sites with a single occupation makes a jump,
which happens at rate $(d-1)(d-2)+ (d-1)2= d(d-1)$. Further, as
long as absorption did not occur, the site with two particles
moves as a continuous-time random walk $X^d_t$ on the complete graph
of $d$ vertices, moving at rate $2$ and starting at site $1$.
Therefore
\begin{eqnarray*}
&&\pee^{SIP(0)}_{(2,1,\ldots,1)} (n_1(t)=1,\ldots, n_i(t)=2, \ldots, n_d(t)=1)
\\
&&= e^{-d(d-1) t} \pee (X^d_t=i)
\\
&&=
e^{-2dt} +\frac1d (1- e^{-2dt}) \delta_{i,1}+ (1-\delta_{i,1})\frac1d (1- e^{-2dt})\;.
\end{eqnarray*}
\een
As these examples illustrate, computations of (appropriately chosen)
moments in the multi-type Wright Fisher diffusion
reduce to finite dimensional Markov chain computations, associated to inclusion
walkers on the complete graph until absorption, which occurs as soon as a site becomes empty.
The same can be done for the multi-type Moran model, using its self-duality.

\vskip.5cm
{
{\bf Acknowledgments.}
We acknowledge financial support from  the Italian Research Funding
Agency (MIUR) through FIRB project ``Stochastic processes in interacting particle
systems: duality, metastability and their applications'', grant n. RBFR10N90W and
the Fondazione Cassa di Risparmio Modena through the International Research 2010 project.
}

\end{document}